\newtheorem{Theorem}{Theorem}
\newtheorem{Cor}{Corollary}
\newtheorem{Lemma}{Lemma}
\theoremstyle{Remark}
\newtheorem{Remark}{Remark}
\theoremstyle{Definition}
\newtheorem{Definition}{Definition}
\title{Variational problems for H\"olderian functions with free terminal point}
\author{Ricardo Almeida\\
\texttt{ricardo.almeida@ua.pt}
\and Nat\'{a}lia Martins\\
\texttt{natalia@ua.pt}}
\date{\begin{minipage}[t]{1\linewidth}
%\centering
CIDMA -- Center for Research and Development in Mathematics and Applications,\\
Department of Mathematics, University of Aveiro, Portugal.
\end{minipage}}
\begin{document}

\maketitle

\begin{abstract}
We develop the new variational calculus introduced in 2011 by J. Cresson and I. Greff, where
the classical derivative is substituted by a new complex operator called the scale derivative.
In this paper we consider several nondifferentiable variational problems with free terminal point:
with and without constraints, %and with one and more than one independent variable.
of first and higher-order type.

\bigskip

\noindent \textbf{Keywords:} H\"olderian functions;
calculus of variations; Euler-Lagrange equation;  natural boundary conditions.

\bigskip

\noindent \textbf{Mathematics Subject Classification 2010:}
primary: 39A13; 49K05; 49S05; secondary: 26A27; 26B20; 49K10
\end{abstract}

%--------------------------------------------------

\section{Introduction}

Dating back to the late 17th century, the calculus of variations has been proved to be a powerfull tool
in several fields, such as physics, geometry, engineering, economics and control theory. One of the earliest variational problems
posed in physics was the problem of determining the shape of a surface of revolution that would encounter the least
resistance when moving through some resisting medium. This problem was solved by Issac Newton and the solution
was published in 1687 in the first book of the collection \emph{Principia}. Since then,
important mathematicians and physicists, such
as John and James Bernoulli, Leibniz, Euler, Lagrange, Jacobi, Weierstrass, Hilbert,
Noether, Tonelly and Lebesgue, studied different variational problems, contributing to the development of the classical  calculus of variations.

The fundamental problem of the classical calculus of variations can be formulated in the following form: among all
differentiable functions $y:[a,b]\rightarrow\mathbb{R}$  such that $y(a) = y_a$ and $y(b) =  y_b$,
where $ y_a,  y_b$ are fixed real numbers,
find the ones that minimize (or maximize) the functional
$$\mathcal{L}[y]=\int_a^b L(t,y(t),y^\prime(t)) dt.$$
It can be proved that the extremizers of this variational problem
must satisfy the
following second-order differential equation
$$\frac{d}{dt}\displaystyle\frac{\partial L}{\partial v}(t,y(t),y^\prime(t))= \displaystyle\frac{\partial L}{\partial y}(t,y(t),y^\prime(t)), \quad \forall t \in [a,b]$$
called the Euler-Lagrange equation
(where $\displaystyle\frac{\partial L}{\partial y}$ and $\displaystyle\frac{\partial L}{\partial v}$
denote, respectively,  the partial derivative of $L$ with respect to the second and third argument).
With the two boundary restrictions, $y(a) = y_a$ and $y(b) =  y_b$, we may determine the extremals of $\mathcal{L}$.
If the boundary condition $y(a) =  y_a$ is not imposed in the initial problem, then in order to find  the extremizers we  have to
add another necessary condition:
$\displaystyle\frac{\partial L}{\partial v}(a,y(a),y^\prime(a))=0$; and thus, instead of the initial condition $y(a)=y_a$, we have one more equation on the system. Also, if $y(b) =  y_b$ is not fixed in the problem, then we need to impose another necessary condition: $\displaystyle\frac{\partial L}{\partial v}(b,y(b),y^\prime(b))=0$.
These two conditions are usually called natural boundary conditions or tranversality conditions.

Since many physical phenomena are described by nondifferentiable functions, and many problems posed in physics can be
formulated using integral functionals, it seemed clear for some mathematicians and physicists that it would be
interesting to develop a calculus of variations for functionals which are defined on a set of nondifferentiable functions.

There exist several different approaches to deal with nondifferentiability in problems of the calculus of variations:
the time scale approach, which typically deals with delta or nabla differentiable functions
\cite{AlmeidaNabla,Bartos,MyID:159,Girejko, malina,MyID:142,NataliaHigherOrderNabla,MyID:198,Malinowska+Martins+Torres2011,Martins+Torres2012}; the fractional approach,
which deals with fractional derivatives of order less than one \cite{Malinowska1,Malinowskabook,Odzijewicz,MyID:182,RicAML,MR2557007,El-Nabulsi,El-Nabulsi:Torres:JMP,withRui:hZ:JDDE,gasta:07};  the quantum approach, which deals with quantum derivatives \cite{Aldwoah,Bang04,Bang05,Malinowska,withMiguel01,Jannussis};
and the scale derivative approach recently introduced by J. Cresson in 2005 \cite{Cresson}.

In this paper we are concerned with the scale derivative approach.
In the paper \cite{CressonGreff}, J. Cresson and I. Greff introduced a new variational calculus
where the classical derivative is replaced by a new complex operator called the scale derivative. Basically, this new derivative
allows the development of an analogue of a differential variational calculus for H\"olderian functions.
The scale variational theory is still
in the very beginning \cite{Cresson,Torres,CressonGreff,Cresson1,Almeida1,Almeida2}, and  much remains to be done.

The main results of the paper \cite{CressonGreff} are:  the Euler-Lagrange equation for a variational problem
with fixed boundary conditions involving a  scale derivative, and the nondifferentiable Noether's Theorem.
In this work we will use the scale derivative introduced in \cite{CressonGreff} (we remark that such derivative is not exactly the same as introduced in \cite{Cresson}).

In \cite{Almeida2} the following scale variational problems were studied:
the isoperimetric problem, the variational problem with dependence on a parameter,
the higher-order variational problem, and the  variational problem with two independent variables, for problems
with fixed boundary conditions.

The main purpose of this paper is to generalize the results of \cite{CressonGreff, Almeida2}.
Let $a$ and $b$ be two fixed real numbers such that $a\leq b$.
We consider  nondifferentiable variational problems

$$\mathcal{I}[y,T]=\int_a^T L\left(t, y(t), \frac{\Box y}{\Box t}(t)\right) \, dt$$

\noindent where $a\leq T\leq b$.  The admissible functions $y$ are H\"olderian such that $y(a)=y_a$,
for some fixed $y_a \in \mathbb{R}$, and  $\Box y/ \Box t$ is the scale derivative of $y$ (to be defined later).
%We assume that the Lagrangian $L=L(t,y,v):[a,b]\times \mathbb{R} \times \mathbb{C} \rightarrow \mathbb{C}$ is of class $C^1$.
Note that here, in contrast to \cite{CressonGreff, Almeida2},
we have a free terminal point $T$ and no constraint is imposed on $y(T)$. Therefore, $T$ and $y(T)$ become  part of the extremal choice process.

In this paper, for expository convenience, we assume that only the terminal point is variable. It is clear that all the arguments used in the proofs of our results are easily extended to the case of a variable initial point.

This paper is organized as follows. In Section 2 we present the scale derivative as introduced in \cite{CressonGreff}
and briefly review some of its properties, namely, the Leibniz rule and the Barrow rule in the scale calculus context.
Our main results are presented in Section 3. In Subsection 3.1 we prove necessary
and sufficient conditions to obtain extremals for complex valued integral functionals with various type of constraint (imposed over the terminal point $T$  and/or over $y(T)$).
In Subsection 3.2 we briefly show how higher-order scale derivatives can be included on the variational problem (Theorem \ref{E-L+HTC}).

% --------------------------------------------------

\section{Preliminaries}

To make the paper self-contained, we begin with a review of the definitions and results from \cite{CressonGreff} needed to the present work.

In what follows we consider $\alpha, \beta \in \  ]0,1[$, $h \in \ ]0,1[$ with $h \ll 1$, $a,b\in \mathbb{R}$ with $a<b$, and define the interval $I:=[a-h,b+h]$.

\begin{Definition} Let $f:I\rightarrow \mathbb{R}$. The $h$-forward difference operator is defined by
$$\Delta_h[f](t):=\frac{f(t+ h)-f(t)}{h}, \quad t\in[a-h,b].$$
The $h$-backward difference operator is defined by
$$\nabla_h[f](t):=\frac{f(t)-f(t-h)}{h}, \quad t\in[a,b+h].$$

\end{Definition}

\begin{Remark}
Obviously, if $f$ is a differentiable  function (in the classical sense),
then
$$\lim_{h\to 0}\Delta_{h}[f](t)=\lim_{h\to 0}\nabla_{h}[f](t) = f'(t).$$
\end{Remark}

\begin{Definition}
The $h$-scale derivative of $f:I \rightarrow \mathbb{R}$   is defined by
\begin{equation}
\label{eq:def:cp}
\frac{{\Box_{h}}f}{\Box t}(t)
=\frac12 \bigg [ \bigg( \Delta_{h}[f](t) + \nabla_{h}[f](t) \bigg)
+i \bigg( \Delta_{h}[f](t) - \nabla_{h}[f](t) \bigg) \bigg ], \quad t \in [a,b], \quad i^2=-1.
\end{equation}
For complex valued functions $f$ we define
$$
\frac{{\Box_{h}}f}{\Box t}(t)
= \frac{{\Box_{h}}\mbox{Re}f}{\Box t}(t)
+ i \frac{{\Box_{h}}\mbox{Im}f}{\Box t}(t)
$$
where Re$f$ and Im$f$ denote, respectively, the real and imaginary part of $f$.
\end{Definition}

\begin{Remark}
Again, if  $f:I \rightarrow \mathbb{R}$ is a differentiable  function, then
$$\lim_{h\to 0} \frac{{\Box_{h}}f}{\Box t}(t)= f'(t).$$
\end{Remark}

Let ${C^0_{conv}}([a,b]\times ]0,1[,\mathbb{C})$ be the subspace of ${C^0}([a,b]\times ]0,1[,\mathbb{C})$
such that for any function $g \in {C^0_{conv}}([a,b]\times ]0,1[,\mathbb{C})$, the limit
$$\lim_{h\to 0}g(t,h)$$
exists for any $t\in [a,b]$. Denote by $E$ a complementary space of ${C^0_{conv}}([a,b]\times ]0,1[,\mathbb{C})$
in ${C^0}([a,b]\times  ]0,1[,\mathbb{C})$ and
denote by  $\pi$ the projection of ${C^0_{conv}}([a,b]\times ]0,1[,\mathbb{C})\oplus E $ onto ${C^0_{conv}}([a,b]\times ]0,1[,\mathbb{C})$, that is,

$$
\begin{array}{lcll}
\pi: & {C^0_{conv}}([a,b]\times ]0,1[,\mathbb{C})
\oplus E & \to & {C^0_{conv}}([a,b]\times ]0,1[,\mathbb{C})\\
&g:= g_{conv}+g_E  & \mapsto & \pi(g)=g_{conv}.
\end{array}
$$

In order to define the scale derivative, we need to introduce the following operator:

$$
\begin{array}{lcll}
\left< \cdot \right>: & {C^0}([a,b]\times ]0,1[,\mathbb{C}) & \to & \mathcal{F}([a,b],\mathbb{C})\\
& g & \mapsto & \left< g \right>: t\mapsto
\displaystyle\lim_{h\to 0}\pi(g)(t,h)
\end{array}
$$
where $\mathcal{F}([a,b],\mathbb{C})$ denotes the set of functions $f:[a,b]\rightarrow \mathbb{C}$.

Finally, we arrive to the main concept introduced in \cite{CressonGreff}:
the scale derivative of $f$ (without the dependence of $h$).

\begin{Definition}
\label{def:ourHD}
The scale derivative of $f\in {C^0}(I,\mathbb{C})$, denoted by $\frac{\Box f}{\Box t}$,  is defined by
\begin{equation}
\label{eq:scaleDer}
\frac{\Box f}{\Box t}(t):=\left< \frac{{\Box_{h}}f}{\Box t} \right>(t), \quad t \in [a,b].
\end{equation}
\end{Definition}

\begin{Remark} As remarked in {\rm \cite{CressonGreff}}, the operator $\left< \cdot \right>$ depends on the choice of the complementary space $E$.
However, this dependence does not change the form of the properties of this derivative.
\end{Remark}

From now on, we assume that a complementary space $E$ is fixed.

It is clear that the scale derivative \eqref{eq:scaleDer} is a linear operator and the scale derivative of a constant is zero.

\begin{Remark}
It is clear that if $f:I \rightarrow \mathbb{R}$ is a differentiable function, then $\frac{\Box f}{\Box t}(t)= f'(t).$
\end{Remark}

Higher-order scale derivatives can be defined as usual.
For a given $n\in \mathbb{N}$,  denote by $I^n:=[a-nh,b+nh]$.
The $n$\emph{th}  scale derivative of a function $f: I^n \rightarrow \mathbb{C}$ is the function
$\frac{\Box^n f}{\Box t^n}$ defined recursively by
$$\frac{\Box^n f}{\Box t^n}(t)=\frac{\Box}{\Box t}\left( \frac{\Box^{n-1} f}{\Box t^{n-1}} \right)(t), \quad t\in[a,b],$$
where $\frac{\Box^1 f}{\Box t^1}:= \frac{\Box f}{\Box t}$ and $\frac{\Box^0 f}{\Box t^0}:=f$.

In what follows we will denote
$$C^n_\Box([a,b], \mathbb{K}):= \{f \in C^0(I^n, \mathbb{K})\mid \frac{\Box^k f}{\Box t^k}\in C^0(I^{n-k}, \mathbb{C}), k=1,2,\ldots,n \}, \quad  \mathbb{K}=
 \mathbb{R} \mbox{ or }  \mathbb{K}= \mathbb{C}.$$

It can be proved that the scale derivative  satisfies a scale analogue of
Leibniz's and Barrow's rule in certain subclasses of $C^0(I,\mathbb{C})$.

First let us recall the definition of H\"{o}lderian function.

\begin{Definition}
Let $f\in C^0(I,\mathbb{C})$. We say that $f$ is H\"{o}lderian of H\"{o}lder exponent
$\alpha$ if there exists a constant $C>0$ such that,
for all $s,t \in I$, the inequality
$$
|f(t)-f(s)|\leq C |t-s|^\alpha
$$
holds. The set of  H\"{o}lderian functions of H\"{o}lder
exponent $\alpha$ defined on $I$ is denoted by $H^\alpha(I, \mathbb{C})$.
\end{Definition}

\begin{Theorem}[The scale Leibniz rule \cite{CressonGreff}]
\label{LeibnizRule}
Let $\alpha, \beta \in ]0,1[$ be such that $\alpha+\beta>1$.
For $f\in H^\alpha(I, \mathbb{R})$ and $g\in H^\beta(I, \mathbb{R})$, we have
\begin{equation*}
\frac{\Box (f.g)}{\Box t}(t)
=\frac{\Box f}{\Box t}(t).g(t)+f(t).\frac{\Box g}{\Box t}(t), \quad t \in [a,b].
\end{equation*}
\end{Theorem}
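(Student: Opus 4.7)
The plan is to reduce the scale Leibniz rule to the classical discrete product rules for the forward and backward difference operators and then show that the ``cross term'' vanishes under the Hölder hypothesis. First I would rewrite
$$\frac{\Box_{h} f}{\Box t}(t)=\frac{1+i}{2}\Delta_h[f](t)+\frac{1-i}{2}\nabla_h[f](t),$$
and expand $\Delta_h[fg](t)$ by adding and subtracting $f(t)g(t+h)$, obtaining
$$\Delta_h[fg](t)=\Delta_h[f](t)\,g(t)+f(t)\,\Delta_h[g](t)+h\,\Delta_h[f](t)\,\Delta_h[g](t).$$
A symmetric manipulation yields
$$\nabla_h[fg](t)=\nabla_h[f](t)\,g(t)+f(t)\,\nabla_h[g](t)-h\,\nabla_h[f](t)\,\nabla_h[g](t).$$

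Substituting these into the definition of the $h$-scale derivative, the terms involving $g(t)$ reconstitute $g(t)\frac{\Box_h f}{\Box t}(t)$ and the terms involving $f(t)$ reconstitute $f(t)\frac{\Box_h g}{\Box t}(t)$, leaving a remainder
$$R_h(t)=h\left[\frac{1+i}{2}\Delta_h[f](t)\,\Delta_h[g](t)-\frac{1-i}{2}\nabla_h[f](t)\,\nabla_h[g](t)\right].$$
Since $f\in H^\alpha(I,\mathbb{R})$ and $g\in H^\beta(I,\mathbb{R})$, we have bounds $|\Delta_h[f](t)|,|\nabla_h[f](t)|\leq C_f h^{\alpha-1}$ and likewise for $g$ with exponent $\beta-1$, so $|R_h(t)|\leq C\,h^{\alpha+\beta-1}$. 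The hypothesis $\alpha+\beta>1$ then gives $R_h(t)\to 0$ as $h\to 0$ pointwise on $[a,b]$; in particular $R_h\in C^0_{conv}$ with $\langle R_h\rangle\equiv 0$.

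The final step is to apply the operator $\langle\cdot\rangle$ to the identity
$$\frac{\Box_h(fg)}{\Box t}(t)=g(t)\,\frac{\Box_h f}{\Box t}(t)+f(t)\,\frac{\Box_h g}{\Box t}(t)+R_h(t)$$
and invoke linearity, together with the fact that multiplication by a function of $t$ alone (here $f(t)$ or $g(t)$) commutes with both $\pi$ and $\lim_{h\to 0}$. This yields the Leibniz formula. The main obstacle I expect is precisely this last technicality: carefully justifying that $\pi$ respects multiplication by the $h$-independent factors $f(t)$ and $g(t)$, i.e.\ that the decomposition $C^0=C^0_{conv}\oplus E$ is stable under such multipliers. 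Once this is granted (which is implicit in the setup of \cite{CressonGreff}), the computation above gives the result cleanly; the arithmetic identity for $\Delta_h[fg]$ and $\nabla_h[fg]$ is routine, and the analytic content is entirely concentrated in the Hölder estimate that kills $R_h$ thanks to $\alpha+\beta>1$.
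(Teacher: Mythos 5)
The paper does not actually prove this theorem --- it is quoted from \cite{CressonGreff} --- but your argument is precisely the standard proof given there: the exact discrete product rules for $\Delta_h$ and $\nabla_h$ leave the remainder $R_h=O(h^{\alpha+\beta-1})$, which the hypothesis $\alpha+\beta>1$ sends to zero uniformly, and your computation of $R_h$ and the H\"older bounds $|\Delta_h[f]|\leq C_f h^{\alpha-1}$, $|\Delta_h[g]|\leq C_g h^{\beta-1}$ are correct. The one step that remains informal is the one you flag yourself: that $\left<\cdot\right>$ commutes with multiplication by the $h$-independent factors $f(t)$ and $g(t)$, i.e.\ that the complement $E$ is stable under such multipliers --- this is likewise left implicit in \cite{CressonGreff}, so your proof is as complete as the original.
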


\begin{Theorem}[The scale Barrow rule \cite{CressonGreff}]
\label{Barrow}
Let $f\in C^1_{\Box}([a,b],\mathbb{R})$ be such that
\begin{equation}
\label{nec_condition}
\lim_{h\to 0} \int_a^b \left(\frac{\Box_h f}{\Box t}\right)_E(t) \, dt=0,
\end{equation}
where
$ \frac{\Box_h f}{\Box t}:=  \left(\frac{\Box_h f}{\Box t}\right)_{conv} +  \left(\frac{\Box_h f}{\Box t}\right)_E.$
Then,
$$
\int_a^b \frac{\Box f}{\Box t}(t)\, dt=f(b)-f(a).
$$
\end{Theorem}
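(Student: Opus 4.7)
The plan is to compute $\int_a^b \frac{\Box_h f}{\Box t}(t)\,dt$ directly from the defining formula \eqref{eq:def:cp}, take $h\to 0$, and then use the splitting $\frac{\Box_h f}{\Box t}=\left(\frac{\Box_h f}{\Box t}\right)_{conv}+\left(\frac{\Box_h f}{\Box t}\right)_E$ together with the standing hypothesis \eqref{nec_condition}.

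First I would exploit linearity to reduce the problem to computing $\int_a^b \Delta_h[f](t)\,dt$ and $\int_a^b \nabla_h[f](t)\,dt$. An elementary change of variables turns these into
\[
\int_a^b \Delta_h[f](t)\,dt=\frac{1}{h}\int_b^{b+h} f(s)\,ds-\frac{1}{h}\int_a^{a+h} f(s)\,ds,
\]
\[
\int_a^b \nabla_h[f](t)\,dt=\frac{1}{h}\int_{b-h}^b f(s)\,ds-\frac{1}{h}\int_{a-h}^a f(s)\,ds.
\]
By continuity of $f$ on $I=[a-h,b+h]$, each of these tends to $f(b)-f(a)$ as $h\to 0$. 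Substituting into \eqref{eq:def:cp}, the imaginary coefficients $\tfrac12(1\pm i)$ cancel and one obtains
\[
\lim_{h\to 0}\int_a^b \frac{\Box_h f}{\Box t}(t)\,dt=f(b)-f(a).
\]

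Next I would split the integrand by the projection onto the complementary space $E$, writing
\[
\int_a^b \frac{\Box_h f}{\Box t}(t)\,dt=\int_a^b \left(\frac{\Box_h f}{\Box t}\right)_{conv}(t,h)\,dt+\int_a^b \left(\frac{\Box_h f}{\Box t}\right)_E(t,h)\,dt.
\]
The hypothesis \eqref{nec_condition} kills the second summand in the limit, so combining with the previous step yields
\[
\lim_{h\to 0}\int_a^b \left(\frac{\Box_h f}{\Box t}\right)_{conv}(t,h)\,dt=f(b)-f(a).
\]

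The final and only delicate step is to interchange the limit and the integral on the left-hand side: by Definition \ref{def:ourHD} the pointwise limit of $\left(\frac{\Box_h f}{\Box t}\right)_{conv}(\cdot,h)$ is exactly $\frac{\Box f}{\Box t}$, and since $f\in C^1_\Box([a,b],\mathbb{R})$ the latter lies in $C^0([a,b],\mathbb{C})$, which is what makes the interchange sensible. This passage from pointwise convergence to convergence of the integrals is the main obstacle; it will have to be justified either by a uniform-convergence argument built into the definition of $C^0_{conv}$ (plausibly what the authors have in mind, given that $[a,b]$ is compact) or by a dominated-convergence bound obtained from the H\"olderian regularity implicit in the setup. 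Once this interchange is granted, the left-hand side becomes $\int_a^b \frac{\Box f}{\Box t}(t)\,dt$ and the identity $\int_a^b \frac{\Box f}{\Box t}(t)\,dt=f(b)-f(a)$ follows.
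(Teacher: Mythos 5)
The paper itself gives no proof of this theorem; it is imported verbatim from \cite{CressonGreff} in the Preliminaries, so there is nothing internal to compare against. Your reconstruction is the standard argument behind that result and is correct in all the steps you carry out: the change of variables giving $\int_a^b\Delta_h[f](t)\,dt=\frac1h\int_b^{b+h}f-\frac1h\int_a^{a+h}f$ (and its backward analogue), the cancellation of the imaginary part so that $\lim_{h\to0}\int_a^b\frac{\Box_h f}{\Box t}(t)\,dt=f(b)-f(a)$, and the use of \eqref{nec_condition} to discard the $E$-component. The one step you flag, interchanging $\lim_{h\to0}$ with $\int_a^b$ on the convergent component, is genuinely the only delicate point: as defined here, $C^0_{conv}$ only guarantees pointwise convergence in $t$, so an additional uniformity or domination hypothesis is needed to pass to the integral; this gap is inherited from (and left implicit in) the source, and your proposal is as complete as the stated hypotheses permit.
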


\begin{Remark}
Although Theorems \ref{LeibnizRule} and \ref{Barrow} were proven for real valued functions,
they still hold for complex valued functions (the proofs are similar to the ones given in {\rm \cite{CressonGreff})}.
\end{Remark}

The following result is an easy consequence of the complex versions of Theorems \ref{LeibnizRule} and \ref{Barrow}.

\begin{Theorem}[The scale integration by parts formula] \label{integration by parts}
Let $f \in H^{\alpha}(I,\mathbb{C})$ and $g \in H^{\beta}(I,\mathbb{C})$ be such that
$\alpha+\beta>1$  and
$$
\lim_{h\to 0} \int_a^b \left(\frac{\Box_h (f \cdot g)}{\Box t}\right)_E(t) \, dt=0.
$$
Then
$$
\int_a^b \frac{\Box f}{\Box t}(t) \cdot g(t) dt = \left[f(t)g(t)\right]_a^b - \int_a^b f(t)\cdot \frac{\Box g}{\Box t}(t) dt.
$$
\end{Theorem}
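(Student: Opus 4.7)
The plan is to combine the scale Leibniz rule and the scale Barrow rule exactly as one does in the classical integration-by-parts argument. First I would apply Theorem \ref{LeibnizRule} (in its complex version, valid by the Remark preceding the statement) to the product $f \cdot g$: since $f \in H^\alpha(I,\mathbb{C})$, $g \in H^\beta(I,\mathbb{C})$, and $\alpha + \beta > 1$, we obtain the pointwise identity
$$
\frac{\Box (f\cdot g)}{\Box t}(t) = \frac{\Box f}{\Box t}(t)\cdot g(t) + f(t)\cdot \frac{\Box g}{\Box t}(t), \quad t\in[a,b].
$$
Integrating this equality on $[a,b]$ and rearranging gives
$$
\int_a^b \frac{\Box f}{\Box t}(t)\cdot g(t)\, dt = \int_a^b \frac{\Box (f\cdot g)}{\Box t}(t)\, dt - \int_a^b f(t)\cdot \frac{\Box g}{\Box t}(t)\, dt.
$$

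Next I would apply the scale Barrow rule (Theorem \ref{Barrow}, again in its complex version) to the function $f\cdot g$. Its hypotheses are precisely the ones assumed: the product $f\cdot g$ lies in $C^1_\Box([a,b],\mathbb{C})$ (it is continuous as a product of continuous functions on $I$, and the Leibniz identity above shows that its scale derivative is a sum of products of continuous functions, hence continuous on $[a,b]$), and the required limit
$$
\lim_{h\to 0} \int_a^b \left(\frac{\Box_h (f \cdot g)}{\Box t}\right)_E(t)\, dt = 0
$$
is exactly the assumption of the theorem. Therefore Barrow yields
$$
\int_a^b \frac{\Box (f\cdot g)}{\Box t}(t)\, dt = f(b)g(b) - f(a)g(a) = \bigl[f(t)g(t)\bigr]_a^b.
$$
Substituting this into the previous display produces the claimed identity.

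The only subtle point — and hence the main obstacle — is justifying that the two cited theorems actually apply to complex-valued $f$ and $g$, since \cite{CressonGreff} states them for real-valued functions. This is addressed by the Remark in the paper that extends both results to the complex case by splitting into real and imaginary parts; this requires noting that the Hölder condition and the hypothesis on the $E$-component pass to $\mathrm{Re}\,f$, $\mathrm{Im}\,f$, $\mathrm{Re}\,g$, $\mathrm{Im}\,g$ and to all four real bilinear products appearing in $\mathrm{Re}(fg)$ and $\mathrm{Im}(fg)$. Once this bookkeeping is in place the proof is purely mechanical.
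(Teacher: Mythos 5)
Your proof is correct and follows exactly the route the paper intends: the paper gives no explicit proof, stating only that the result is ``an easy consequence of the complex versions of Theorems \ref{LeibnizRule} and \ref{Barrow}'', and your argument (Leibniz applied to $f\cdot g$, integrate, then Barrow for the boundary term) is precisely that consequence spelled out. The only caveat, which is really a gap in the paper's own hypotheses rather than in your argument, is that verifying $f\cdot g\in C^1_{\Box}([a,b],\mathbb{C})$ for Barrow requires continuity of $\frac{\Box f}{\Box t}$ and $\frac{\Box g}{\Box t}$, which is not literally guaranteed by $f\in H^{\alpha}$, $g\in H^{\beta}$ alone.
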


Next we prove the scale Taylor Theorem of first order.

\begin{Theorem}\label{Taylor}
Given $f\in C^2_{\Box}([a,b],\mathbb{C})$ and $t\in [a,b]$ such that
$\frac{\Box f}{\Box t} \in H^{\alpha}(I,\mathbb{C})$,
$$\lim_{h\to 0} \int_a^t \left(\frac{\Box_h f}{\Box \tau}\right)_E(\tau) \, d\tau=0,$$
and
$$
\lim_{h\to 0} \int_a^t \left(\frac{\Box_h ( \frac{\Box f}{\Box \tau}\cdot(t-\tau))}{\Box \tau}\right)_E(\tau) \, d\tau=0,
$$

\noindent then
$$f(t)=f(a)+\frac{\Box f}{\Box t}(a)(t-a)+O(t-a)^2.$$
\end{Theorem}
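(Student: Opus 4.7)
The plan is to mimic the classical proof of Taylor's theorem (integrate once, then integration by parts) but inside the scale calculus framework, using the scale Barrow rule and the scale integration by parts formula stated above.

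First, I would apply the scale Barrow rule (Theorem \ref{Barrow}) to $f$ on the interval $[a,t]$. The first hypothesis of the theorem — the vanishing limit involving $(\Box_h f/\Box\tau)_E$ — is precisely what is required to conclude
\[
f(t)-f(a)=\int_a^t \frac{\Box f}{\Box \tau}(\tau)\,d\tau.
\]
Here I would remark explicitly that $f\in C^2_\Box$ ensures $\Box f/\Box t \in C^0$, so the integrand makes sense.

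Next, I would apply the scale integration by parts formula (Theorem \ref{integration by parts}) to this integral, taking $u(\tau):=\Box f/\Box \tau$ (Hölderian of exponent $\alpha$ by assumption) and $v(\tau):=\tau-t$ (differentiable, hence lying in $H^\beta(I,\mathbb{C})$ for every $\beta\in\,]0,1[$; in particular we can pick $\beta$ so that $\alpha+\beta>1$, and $\Box v/\Box \tau\equiv 1$ on $[a,t]$). The remaining hypothesis of Theorem \ref{integration by parts} — the vanishing limit for $(\Box_h(u\cdot v)/\Box\tau)_E$ — is exactly the second hypothesis of the theorem. This yields
\[
\int_a^t\frac{\Box f}{\Box\tau}(\tau)\,d\tau
=\bigl[(\tau-t)\tfrac{\Box f}{\Box\tau}(\tau)\bigr]_a^t
-\int_a^t(\tau-t)\frac{\Box^2 f}{\Box\tau^2}(\tau)\,d\tau
=\frac{\Box f}{\Box t}(a)(t-a)+\int_a^t(t-\tau)\frac{\Box^2 f}{\Box\tau^2}(\tau)\,d\tau.
\]

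Finally, I would estimate the remainder. Since $f\in C^2_\Box([a,b],\mathbb{C})$, the function $\Box^2 f/\Box \tau^2$ is continuous on the compact set $[a,b]$, hence bounded by some constant $M>0$. Therefore
\[
\left|\int_a^t(t-\tau)\frac{\Box^2 f}{\Box\tau^2}(\tau)\,d\tau\right|
\le M\int_a^t(t-\tau)\,d\tau=\frac{M}{2}(t-a)^2,
\]
which is $O((t-a)^2)$. Combining the three displays gives the claim.

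I do not foresee a serious obstacle: the proof is essentially a book-keeping exercise in matching the two explicit vanishing-limit hypotheses in the statement with the hypotheses of Theorems \ref{Barrow} and \ref{integration by parts}. The only slightly delicate point is the choice of the Hölder exponent $\beta$ for $v(\tau)=\tau-t$ in the integration by parts step, which is harmless because $v$ is smooth and so lies in every $H^\beta$.
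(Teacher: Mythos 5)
Your proposal is correct and follows essentially the same route as the paper's own proof: the scale Barrow rule gives $f(t)-f(a)=\int_a^t \frac{\Box f}{\Box \tau}(\tau)\,d\tau$, the scale integration by parts formula applied against the linear function $\tau\mapsto t-\tau$ (whose scale derivative is $-1$) produces the first-order term plus the integral remainder, and the two vanishing-limit hypotheses are exactly the ones consumed by those two theorems. Your explicit bound $\bigl|\int_a^t(t-\tau)\frac{\Box^2 f}{\Box\tau^2}(\tau)\,d\tau\bigr|\le \frac{M}{2}(t-a)^2$ using the continuity of $\frac{\Box^2 f}{\Box \tau^2}$ on $[a,b]$ is in fact slightly more careful than the paper's terse ``$=M(t-a)^2$ for some $M\in\mathbb{C}$.''
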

\begin{proof} First observe that $\tau\mapsto t-\tau$ is of class $C^1$ and thus
$$\frac{\Box }{\Box \tau}(t-\tau) =\frac{d}{d\tau}(t-\tau)=-1.$$
Using Theorem \ref{Barrow} we conclude that
$$\begin{array}{ll}
f(t)-f(a) & =\displaystyle \int_a^t \frac{\Box f}{\Box \tau}(\tau) \, d \tau\\
         &  =-\displaystyle \int_a^t \frac{\Box f}{\Box \tau}(\tau)\cdot  \frac{\Box }{\Box \tau}(t-\tau) \, d \tau.\\
\end{array}$$
Using the scale integration by parts formula (Theorem \ref{integration by parts}), we get
$$\begin{array}{ll}
f(t)-f(a) & = \displaystyle\frac{\Box f}{\Box t}(a)(t-a)+ \int_a^t \frac{\Box^2 f}{\Box \tau^2}(\tau)\cdot  (t-\tau) \, d \tau\\
& = \displaystyle\frac{\Box f}{\Box t}(a)(t-a)+ M  (t-a)^2,\\
\end{array}$$
for some $M\in\mathbb C$, which ends the proof.
\end{proof}
\vspace{0.3 cm}

We finish this section with the following useful result.

\begin{Lemma}\label{Lemma1} Let $\alpha, \beta \in ]0,1[$ be such that $\alpha\leq \beta$. Then, for all $y \in H^{\alpha}(I,\mathbb{C})$
and $\eta \in H^{\beta}(I,\mathbb{C})$, $y+\eta \in  H^{\alpha}(I,\mathbb{C})$.
\end{Lemma}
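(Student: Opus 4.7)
The plan is to apply the triangle inequality to $|(y+\eta)(t)-(y+\eta)(s)|$ and then use the fact that on a bounded interval $I$ any $\beta$-H\"olderian function is automatically $\alpha$-H\"olderian whenever $\alpha\leq\beta$.

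First I would fix arbitrary $s,t\in I$ and write, using the H\"older hypotheses on $y$ and $\eta$, constants $C_1,C_2>0$ such that
$$|y(t)-y(s)|\leq C_1|t-s|^\alpha \quad\text{and}\quad |\eta(t)-\eta(s)|\leq C_2|t-s|^\beta.$$
The key observation is the elementary inequality
$$|t-s|^\beta = |t-s|^{\beta-\alpha}\cdot|t-s|^\alpha \leq \ell^{\,\beta-\alpha}\,|t-s|^\alpha,$$
where $\ell:=b-a+2h$ denotes the length of $I$; this is where the assumption $\alpha\leq\beta$ is used together with the boundedness of $I$ (otherwise $|t-s|^{\beta-\alpha}$ is not bounded).

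Combining these two estimates via the triangle inequality gives
$$|(y+\eta)(t)-(y+\eta)(s)| \leq \bigl(C_1+C_2\,\ell^{\,\beta-\alpha}\bigr)|t-s|^\alpha,$$
which exhibits $y+\eta$ as a member of $H^\alpha(I,\mathbb{C})$ with an explicit H\"older constant. There is no real obstacle here; the only subtle point worth noting is that the argument fails on an unbounded domain, so one should emphasize the role of $I$ being a (fixed) compact interval. The proof for complex valued functions is identical to the real case since the estimate is on moduli.
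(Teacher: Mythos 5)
Your argument is correct and complete: the triangle inequality combined with the bound $|t-s|^{\beta-\alpha}\leq \ell^{\,\beta-\alpha}$ (valid because $I=[a-h,b+h]$ is bounded and $\beta-\alpha\geq 0$) yields the H\"older constant $C_1+C_2\,\ell^{\,\beta-\alpha}$ for $y+\eta$. The paper states Lemma \ref{Lemma1} without proof, so there is nothing to compare against; your proof is the standard one and correctly identifies the only place where $\alpha\leq\beta$ and the compactness of $I$ are needed.
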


% --------------------------------------------------

\section{Main results}

The aim is to exhibit several necessary and sufficient conditions in order to determine scale extremals for a certain class of functionals, which domain is the set of H\"{o}lderian functions. Not only the Euler-Lagrange equation will be obtained, but also natural boundary conditions will appear.

\subsection{First-order variational problems}

We consider the following functional:

$$\mathcal{I}[y,T]=\displaystyle \int_a^T L\left(t, y(t), \frac{\Box y}{\Box t}(t)\right) \, dt$$

\noindent defined on

$$\mathcal{A}= \{y \in H^\alpha (I, \mathbb{R}): y(a)=y_a \wedge y \in C^1_{\Box}([a,b],\mathbb{R})\},$$

\noindent where $T\in \mathbb{R}$ is such that $a\leq T\leq b$, the Lagrangian
$L=L(t,y,v):[a,b]\times \mathbb{R} \times \mathbb{C}  \rightarrow \mathbb{C}$ is of class $C^1$,
and $y_a \in \mathbb{R}$ is a given fixed real number.
We emphasize that we have a free terminal point $T$ and no constraint on $y(T)$. Hence,  $T$ and
$y(T)$ become part of the extremal choice process.

\begin{Definition}\label{scale extremal}
We say that $(y,T)$ is a scale extremal of functional $\mathcal{I}$ defined on $\mathcal{A}$
if, for any $\eta \in H^\beta(I, \mathbb{R})\cap C^1_{\Box}([a,b], \mathbb{R})$ such that $\eta(a)=0$, and $\delta \in \mathbb{R}$,
$$
\frac{d}{d\varepsilon}\left.\mathcal{I}[y + \varepsilon \eta, T + \varepsilon \delta ]\right|_{\varepsilon = 0}=0.
$$
\end{Definition}

\vspace{0.3 cm}

Our first goal is to obtain a necessary and a sufficient condition to $(y,T)$ be a scale extremal of the following functional

\begin{equation}%
\left\{
\begin{array}
[c]{l}%
\mathcal{I}[y,T]=\displaystyle \int_a^T L\left(t, y(t), \frac{\Box y}{\Box t}(t)\right) \, dt\\
\\
y\in \mathcal{A}, \, T\in[a,b].
\end{array}
\tag{P}\label{P}%
\right.
\end{equation}

In the sequel we assume that $ \alpha + \beta >1$ and $\alpha \leq \beta$.
For simplicity of notation, we introduce the operator $[\cdot]$ defined by
$$[y](t) =\left(t,y(t),\frac{\Box y}{\Box t}(t)\right).$$

\begin{Theorem}[The scale Euler--Lagrange equation and natural boundary conditions I]
\label{E-L+TC}
Let $\widetilde{T}\in [a,b]$
and $\widetilde{y} \in \mathcal{A}$ be such that $\displaystyle\frac{\partial L}{\partial v}[\widetilde{y}] \in H^\alpha(I,\mathbb{C})$ and
$$
\lim_{h \to 0} \int_a^{\widetilde{T}}\left( \frac{\Box_h}{\Box t}\left(\frac{\partial L}{\partial v}[\widetilde{y}]\cdot \eta  \right)(t) \right)_E \, dt =0
$$
for all $\eta \in H^\beta(I,\mathbb{R}) \cap C^1_{\Box}([a,b], \mathbb{R})$ such that $\eta(a)=0$.
The pair $(\widetilde{y},\widetilde{T})$ is a scale extremal of functional $\mathcal{I}$ defined on $\mathcal{A}$
if and only if the following conditions hold:
\begin{enumerate}
\item $\displaystyle\frac{\partial L}{\partial y}[\widetilde{y}](t)
=\frac{\Box}{\Box t}\left(\frac{\partial L}{\partial v}[\widetilde{y}]\right)(t)$ for all $t\in [a,\widetilde{T}]$;
\item $\displaystyle\frac{\partial L}{\partial v}\left( \widetilde{T},\widetilde{y}(\widetilde{T}),\frac{\Box \widetilde{y}}{\Box t}(\widetilde{T}) \right)=0$;
\item $ \displaystyle L\left( \widetilde{T},\widetilde{y}(\widetilde{T}),\frac{\Box \widetilde{y}}{\Box t}(\widetilde{T}) \right) =0$.
\end{enumerate}
\end{Theorem}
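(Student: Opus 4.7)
The plan is to follow the standard variational argument adapted to the scale calculus: compute the first variation, transfer the $\Box$-derivative off the test function via the scale integration by parts formula, and exhaust the resulting identity by independent specializations of the two free parameters $\eta$ and $\delta$. For an admissible perturbation $(\eta,\delta)$ with $\eta \in H^\beta(I,\mathbb{R}) \cap C^1_{\Box}([a,b],\mathbb{R})$, $\eta(a) = 0$, and $\delta \in \mathbb{R}$, set $\varphi(\varepsilon) := \mathcal{I}[\widetilde{y} + \varepsilon \eta, \widetilde{T} + \varepsilon \delta]$. Lemma \ref{Lemma1} (using $\alpha \leq \beta$) together with the linearity of $\Box/\Box t$ and the condition $\eta(a) = 0$ guarantees that $\widetilde{y} + \varepsilon \eta \in \mathcal{A}$, so $\varphi$ is well defined near $\varepsilon = 0$. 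Differentiating under the integral sign, and applying the classical rule for an integral with variable upper limit at $\varepsilon = 0$, I obtain
\begin{equation*}
\varphi'(0) = \delta\, L[\widetilde{y}](\widetilde{T}) + \int_a^{\widetilde{T}} \left( \frac{\partial L}{\partial y}[\widetilde{y}](t)\, \eta(t) + \frac{\partial L}{\partial v}[\widetilde{y}](t)\, \frac{\Box \eta}{\Box t}(t) \right) dt.
\end{equation*}

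Next, I apply the scale integration by parts formula (Theorem \ref{integration by parts}) to the term containing the scale derivative of $\eta$. The standing hypothesis $\alpha + \beta > 1$ together with the explicit $E$-part integral condition in the statement is precisely what is required for its applicability, and since $\eta(a) = 0$ the boundary contribution collapses to a single endpoint term at $\widetilde{T}$. Combining these manipulations, the stationarity $\varphi'(0) = 0$ becomes
\begin{equation*}
\int_a^{\widetilde{T}} \left( \frac{\partial L}{\partial y}[\widetilde{y}](t) - \frac{\Box}{\Box t}\!\left(\frac{\partial L}{\partial v}[\widetilde{y}]\right)\!(t) \right) \eta(t)\, dt + \frac{\partial L}{\partial v}[\widetilde{y}](\widetilde{T})\, \eta(\widetilde{T}) + \delta\, L[\widetilde{y}](\widetilde{T}) = 0,
\end{equation*}
required to hold for every admissible pair $(\eta,\delta)$.

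The three conditions now drop out by a standard specialization. First take $\delta = 0$ and $\eta$ with $\eta(\widetilde{T}) = 0$: the endpoint and $\delta$ terms vanish, so a scale analogue of the fundamental lemma of the calculus of variations (applied to test functions in $H^\beta(I,\mathbb{R}) \cap C^1_{\Box}([a,b],\mathbb{R})$ vanishing at both endpoints of $[a,\widetilde{T}]$) forces condition 1. Once condition 1 holds, the integral drops out and the residual identity $\frac{\partial L}{\partial v}[\widetilde{y}](\widetilde{T})\, \eta(\widetilde{T}) + \delta\, L[\widetilde{y}](\widetilde{T}) = 0$ must hold for independent free values of $\eta(\widetilde{T})$ and $\delta$. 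Choosing $\delta = 0$ with $\eta(\widetilde{T}) \neq 0$ yields condition 2; choosing $\eta \equiv 0$ with $\delta \neq 0$ yields condition 3. Sufficiency is immediate by reading the computation backwards: if conditions 1--3 all hold, then $\varphi'(0) = 0$ for every admissible $(\eta,\delta)$.

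The principal technical hurdle I anticipate is the scale analogue of the fundamental lemma: one must produce enough H\"olderian test functions vanishing at $a$ and $\widetilde{T}$, with arbitrary sign and localized support, to conclude that the continuous integrand factor vanishes identically on $[a,\widetilde{T}]$. A subsidiary concern is verifying consistently the hypotheses of Theorem \ref{integration by parts}---in particular, that the product $\frac{\partial L}{\partial v}[\widetilde{y}] \cdot \eta$ satisfies the vanishing $E$-part integral condition---but this is exactly the assumption embedded in the statement. With these two points secured, the remaining calculation is the familiar specialization argument above.
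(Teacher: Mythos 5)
Your proposal is correct and follows essentially the same route as the paper: the same first variation with the extra term $\delta\,L[\widetilde{y}](\widetilde{T})$ from the moving upper limit, the same use of the scale integration by parts formula, and the same specializations of $(\eta,\delta)$ to extract the Euler--Lagrange equation and the two terminal conditions (you merely extract conditions 2 and 3 in the opposite order, and you make the sufficiency direction explicit where the paper leaves it implicit).
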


\begin{proof}
Suppose that $(\widetilde{y},\widetilde{T})$ is a scale extremal of problem \eqref{P}. Hence, by definition,
$$
\frac{d}{d\varepsilon}\left.\mathcal{I}[\widetilde{y} + \varepsilon \eta, \widetilde{T} + \varepsilon \delta ]\right|_{\varepsilon = 0}=0
$$
for all $\eta \in H^\beta(I,\mathbb{R}) \cap C^1_{\Box}([a,b], \mathbb{R})$ such that $\eta (a)=0$, and all $\delta \in \mathbb{R}$.

Note that, by Lemma \ref{Lemma1}, $\widetilde{y} + \varepsilon \eta \in \mathcal{A}$. Also note that

\begin{equation}
\begin{split} \label{provaE-L0}
0& =  \frac{d}{d\varepsilon}\left.\mathcal{I}[\widetilde{y} + \varepsilon \eta, \widetilde{T} + \varepsilon \delta ]\right|_{\varepsilon = 0} \\
&= \frac{d}{d\varepsilon} \left(\int_a^{\widetilde{T}+ \varepsilon \delta} L\left(t, \widetilde{y}(t) + \varepsilon \eta(t), \frac{\Box \widetilde{y}}{\Box t}(t) + \varepsilon \frac{\Box \eta}{\Box t}(t)  \right) \, dt\right)\Big|_{\varepsilon = 0}\\
& = \displaystyle \int_a^{\widetilde{T}} \left[ \frac{\partial L}{\partial y}[\widetilde{y}](t)\cdot \eta(t)
+\frac{\partial L}{\partial v}[\widetilde{y}](t)\cdot \frac{\Box \eta}{\Box t}(t) \right]\, dt +
\displaystyle L\left( \widetilde{T},\widetilde{y}(\widetilde{T}),\frac{\Box \widetilde{y}}{\Box t}(\widetilde{T}) \right)\cdot \delta \\
%& = \displaystyle \int_a^{\widetilde{T}} \left[ \frac{\partial L}{\partial y}[\widetilde{y}](t) -  \frac{\Box }{\Box t}\left(\frac{\partial L}{\partial v}[\widetilde{y}]\right)(t)\right]\cdot \eta(t) \, dt
%+
%\displaystyle \int_a^{\widetilde{T}}  \frac{\Box}{\Box t}\left(\frac{\partial L}{\partial v}[\widetilde{y}]\cdot \eta\right)(t)\, dt +  \displaystyle L\left( \widetilde{T},\widetilde{y}(\widetilde{T}),\frac{\Box \widetilde{y}}{\Box t}(\widetilde{T}) \right)\cdot \delta \\
& = \displaystyle \int_a^{\widetilde{T}} \left[ \frac{\partial L}{\partial y}[\widetilde{y}](t) -  \frac{\Box }{\Box t}\left(\frac{\partial L}{\partial v}[\widetilde{y}]\right)(t)\right]\cdot \eta(t) \, dt
+ \displaystyle \left[ \frac{\partial L}{\partial v}[\widetilde{y}](t)\cdot \eta(t)\right]_a^{\widetilde{T}}
+  \displaystyle L\left( \widetilde{T},\widetilde{y}(\widetilde{T}),\frac{\Box \widetilde{y}}{\Box t}(\widetilde{T}) \right)\cdot \delta
\end{split}
\end{equation}

Since $\eta(a)=0$, then
\begin{equation} \label{provaE-L}
\displaystyle \int_a^{\widetilde{T}} \left[ \frac{\partial L}{\partial y}[\widetilde{y}](t) -  \frac{\Box }{\Box t}\left(\frac{\partial L}{\partial v}[\widetilde{y}]\right)(t)\right]\cdot \eta(t) \, dt
+ \displaystyle \frac{\partial L}{\partial v}[\widetilde{y}](\widetilde{T})\cdot \eta(\widetilde{T})
+  \displaystyle L\left( \widetilde{T},\widetilde{y}(\widetilde{T}),\frac{\Box \widetilde{y}}{\Box t}(\widetilde{T}) \right)\cdot \delta =0
\end{equation}

\noindent If we restrict the variations in (\ref{provaE-L}) to those such that $\eta(\widetilde{T})=0$ and  $\delta=0$, we get
$$
 \displaystyle \int_a^{\widetilde{T}} \left[ \frac{\partial L}{\partial y}[\widetilde{y}](t) -  \frac{\Box }{\Box t}\left(\frac{\partial L}{\partial v}[\widetilde{y}]\right)(t)\right]\cdot \eta(t) \, dt=0.
$$
From the fundamental lemma of the calculus of variations it follows that
\begin{equation}\label{E-L equation}
\frac{\partial L}{\partial y}[\widetilde{y}](t) -  \frac{\Box }{\Box t}\left(\frac{\partial L}{\partial v}[\widetilde{y}]\right)(t)=0
\end{equation}
for all $t\in [a,\widetilde{T}]$.

\noindent If we restrict in (\ref{provaE-L})  to those $\eta$ such that $\eta(\widetilde{T})=0$,  we get
\begin{equation}\label{exp1}
\displaystyle \int_a^{\widetilde{T}} \left[ \frac{\partial L}{\partial y}[\widetilde{y}](t) -  \frac{\Box }{\Box t}\left(\frac{\partial L}{\partial v}[\widetilde{y}]\right)(t)\right]\cdot \eta(t) \, dt
+  \displaystyle L\left( \widetilde{T},\widetilde{y}(\widetilde{T}),\frac{\Box \widetilde{y}}{\Box t}(\widetilde{T}) \right)\cdot \delta=0.
\end{equation}

\noindent Substituting the scale Euler-Lagrange equation (\ref{E-L equation}) into (\ref{exp1}) we obtain
$$L\left( \widetilde{T},\widetilde{y}(\widetilde{T}),\frac{\Box \widetilde{y}}{\Box t}(\widetilde{T}) \right)\cdot \delta=0.$$
By the arbitrariness of $\delta$ we get
$$L\left( \widetilde{T},\widetilde{y}(\widetilde{T}),\frac{\Box \widetilde{y}}{\Box t}(\widetilde{T}) \right)=0.$$

\noindent Substituting  $\delta=0$ and  the scale Euler-Lagrange equation (\ref{E-L equation}) into (\ref{provaE-L}), we have that

$$
\displaystyle \frac{\partial L}{\partial v}[\widetilde{y}](\widetilde{T})\cdot \eta(\widetilde{T})=0.
$$
From the arbitrariness of $\eta$, we conclude that
$$
\displaystyle\frac{\partial L}{\partial v}\left( \widetilde{T},\widetilde{y}(\widetilde{T}),\frac{\Box \widetilde{y}}{\Box t}(\widetilde{T}) \right)=0.
$$
\end{proof}

\begin{Cor}{\rm (\cite{CressonGreff})} Suppose that $T$ is fixed in problem \eqref{P} and the set of admissible functions is given by
$$\mathcal{B}:= \mathcal{A} \cap \{y \in H^\alpha (I, \mathbb{R}): y(T)=y_T\}$$
for some fixed real number $y_T$.
Let
$\widetilde{y} \in \mathcal{B}$ be such that $\displaystyle\frac{\partial L}{\partial v}[\widetilde{y}] \in H^\alpha(I,\mathbb{C})$ and
$$
\lim_{h \to 0} \int_a^{T}\left( \frac{\Box_h}{\Box t}\left(\frac{\partial L}{\partial v}[\widetilde{y}]\cdot \eta  \right)(t) \right)_E \, dt =0
$$
for all $\eta \in H^\beta(I,\mathbb{R}) \cap C^1_{\Box}([a,b], \mathbb{R})$ such that $\eta(a)=\eta(T)=0$.
Then $\widetilde{y}$ is a scale extremal of functional $\mathcal{I}$   in the class
$\mathcal{B}$
if and only if $$\displaystyle\frac{\partial L}{\partial y}[\widetilde{y}](t)
=\frac{\Box}{\Box t}\left(\frac{\partial L}{\partial v}[\widetilde{y}]\right)(t), \quad \quad \forall t\in [a,T].$$
\end{Cor}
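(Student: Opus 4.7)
The plan is to mimic the proof of Theorem \ref{E-L+TC}, simply specializing it to the situation in which both endpoints of $y$ are fixed and $T$ is no longer a free variable. Since $T$ is fixed, only variations of $y$ are allowed; since we further require $y(T)=y_T$, the admissible perturbations $\eta$ must satisfy $\eta(a)=\eta(T)=0$. Lemma \ref{Lemma1} guarantees that $\widetilde{y}+\varepsilon \eta \in \mathcal{B}$ for all such $\eta$, so the variational computation is legitimate.

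First, assuming that $\widetilde{y}$ is a scale extremal of $\mathcal{I}$ in $\mathcal{B}$, I would write
$$
0=\frac{d}{d\varepsilon}\Big|_{\varepsilon=0}\mathcal{I}[\widetilde{y}+\varepsilon\eta]
=\int_a^T \left[\frac{\partial L}{\partial y}[\widetilde{y}](t)\cdot \eta(t)+\frac{\partial L}{\partial v}[\widetilde{y}](t)\cdot \frac{\Box \eta}{\Box t}(t)\right] dt,
$$
exactly as in the first step of \eqref{provaE-L0}, but with no boundary term coming from the terminal point (since $T$ is fixed, $\delta=0$). Next, I would apply the scale integration by parts formula (Theorem \ref{integration by parts}) to the second summand. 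The hypothesis $\displaystyle\frac{\partial L}{\partial v}[\widetilde{y}]\in H^\alpha(I,\mathbb{C})$, together with $\eta\in H^\beta(I,\mathbb{R})$, $\alpha+\beta>1$, and the assumed vanishing of the $E$-component limit, provides exactly the regularity needed to invoke Theorem \ref{integration by parts}. The boundary term $\left[\frac{\partial L}{\partial v}[\widetilde{y}](t)\cdot\eta(t)\right]_a^T$ vanishes thanks to $\eta(a)=\eta(T)=0$, leaving
$$
\int_a^T \left[\frac{\partial L}{\partial y}[\widetilde{y}](t)-\frac{\Box}{\Box t}\left(\frac{\partial L}{\partial v}[\widetilde{y}]\right)(t)\right]\cdot \eta(t)\, dt = 0
$$
for all admissible $\eta$. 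The fundamental lemma of the calculus of variations then yields the scale Euler--Lagrange equation on $[a,T]$.

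The converse direction is obtained by reversing the computation: if the scale Euler--Lagrange equation holds on $[a,T]$, then integration by parts together with the boundary conditions on $\eta$ shows immediately that the Gateaux derivative vanishes for every admissible variation, and hence $\widetilde{y}$ is a scale extremal in $\mathcal{B}$.

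There is no real obstacle here, as the result is essentially a special case of Theorem \ref{E-L+TC}: taking $\delta=0$ kills the transversality condition involving $L$, and restricting to variations with $\eta(T)=0$ kills the natural boundary condition $\frac{\partial L}{\partial v}[\widetilde{y}](T)=0$. The only point requiring a brief justification is checking that the regularity hypothesis of Theorem \ref{integration by parts} is inherited from the hypotheses stated in the corollary, which it is verbatim.
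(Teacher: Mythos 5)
Your proposal is correct and follows exactly the route the paper intends: the corollary is obtained by rerunning the proof of Theorem \ref{E-L+TC} with $\delta=0$ and variations restricted to $\eta(a)=\eta(T)=0$, so that the boundary term from the scale integration by parts vanishes and only the Euler--Lagrange equation survives via the fundamental lemma. Your remark that one specializes the \emph{proof} (the class of variations) rather than the \emph{statement} of the theorem is the right way to see it, since extremality over the smaller class $\mathcal{B}$ is a priori weaker than extremality in problem \eqref{P}.
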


\begin{Remark} {\rm  (cf. \cite{Chiang})} If we restrict the set of admissible functions to be the set $\{y \in C^1([a,b], \mathbb{R}): y(a)=y_a\}$,
then problem \eqref{P} reduces to the classical variational problem

\begin{equation}%
\left\{
\begin{array}
[c]{l}%
\mathcal{I}[y,T]=\displaystyle \int_a^T L\left(t, y(t), y^\prime(t)\right) \, dt \quad \rightarrow extremize\\
\\
y\in C^1 ([a,b], \mathbb{R})  \\
\\
y\left(  a\right)  =y_a,
\end{array}
\right.
\label{ProblemaClassico}
\end{equation}
and, by Theorem \ref{E-L+TC}, we can conclude that if $(\widetilde{y}, \widetilde{T})$ is an extremizer (that is, minimizer or maximizer)
of problem \eqref{ProblemaClassico}, then
\begin{enumerate}
\item $\displaystyle\frac{\partial L}{\partial y}[\widetilde{y}](t)
=\frac{d}{dt}\left(\frac{\partial L}{\partial v}[\widetilde{y}]\right)(t)$ for all $t\in [a,\widetilde{T}]$;
\item $\displaystyle\frac{\partial L}{\partial v}\left( \widetilde{T},\widetilde{y}(\widetilde{T}),\widetilde{y}^{\prime}(\widetilde{T}) \right)=0$;
\item $ \displaystyle L\left( \widetilde{T},\widetilde{y}(\widetilde{T}),\widetilde{y}^{\prime}(\widetilde{T}) \right) =0$.
\end{enumerate}

\end{Remark}

Doing similar calculations as done in the proof of Theorem \ref{E-L+TC} one can prove the following result.

%
%\begin{Remark} In the conditions of Theorem \ref{E-L+TC}, if the boundary condition $y(a)$ in problem \eqref{P}  is also free, then doing similar calculations and done before, it can be proven that the pair $(\widetilde{y},\widetilde{T})$ is a scale extremal of functional \ $\mathcal{I}$
%if and only if the following conditions hold:
%\begin{enumerate}
%\item $\displaystyle\frac{\partial L}{\partial y}[\widetilde{y}](t)
%=\frac{\Box}{\Box t}\left(\frac{\partial L}{\partial v}[\widetilde{y}]\right)(t)$ for all $t\in [a,\widetilde{T}]$;
%\item $\displaystyle\frac{\partial L}{\partial v}\left( a,\widetilde{y}(a),\frac{\Box \widetilde{y}}{\Box t}(a) \right)=0$;
%\item $\displaystyle\frac{\partial L}{\partial v}\left( \widetilde{T},\widetilde{y}(\widetilde{T}),\frac{\Box \widetilde{y}}{\Box t}(\widetilde{T}) \right)=0$;
%\item $ \displaystyle L\left( \widetilde{T},\widetilde{y}(\widetilde{T}),\frac{\Box \widetilde{y}}{\Box t}(\widetilde{T}) \right) =0$.
%\end{enumerate}
%\end{Remark}

\begin{Theorem}[The scale Euler--Lagrange equation and natural boundary conditions II]
\label{E-L+TC 2} Let $\widetilde{T}\in [a,b]$
and $\widetilde{y} \in C^1_{\Box}([a,b], \mathbb{R})$ be such that $\displaystyle\frac{\partial L}{\partial v}[\widetilde{y}] \in H^\alpha(I,\mathbb{C})$ and
$$
\lim_{h \to 0} \int_a^{\widetilde{T}}\left( \frac{\Box_h}{\Box t}\left(\frac{\partial L}{\partial v}[\widetilde{y}]\cdot \eta  \right)(t) \right)_E \, dt =0
$$
for all $\eta \in H^\beta(I,\mathbb{R}) \cap C^1_{\Box}([a,b], \mathbb{R})$.
The pair $(\widetilde{y},\widetilde{T})$ is a scale extremal of functional $\mathcal{I}$ defined on $C^1_{\Box}([a,b], \mathbb{R})$
if and only if the following conditions hold:\\
\begin{enumerate}
\item  $\displaystyle\frac{\partial L}{\partial y}[\widetilde{y}](t)
=\frac{\Box}{\Box t}\left(\frac{\partial L}{\partial v}[\widetilde{y}]\right)(t)$ for all $t\in [a,\widetilde{T}]$;\\
\item $\displaystyle\frac{\partial L}{\partial v}\left( a,\widetilde{y}(a),\frac{\Box \widetilde{y}}{\Box t}(a) \right)=0$;\\
\item $\displaystyle\frac{\partial L}{\partial v}\left( \widetilde{T},\widetilde{y}(\widetilde{T}),\frac{\Box \widetilde{y}}{\Box t}(\widetilde{T}) \right)=0$;\\
\item $\displaystyle L\left( \widetilde{T},\widetilde{y}(\widetilde{T}),\frac{\Box \widetilde{y}}{\Box t}(\widetilde{T}) \right) =0$.
 \end{enumerate}
\end{Theorem}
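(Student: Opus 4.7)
The plan is to mimic the proof of Theorem \ref{E-L+TC} almost verbatim, with the sole structural change being that the variations $\eta$ are now unconstrained at $t=a$. This removal of the constraint $\eta(a)=0$ is what forces a fourth (rather than three) optimality condition to appear, namely the natural boundary condition $\partial L/\partial v=0$ at the left endpoint.

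First, I would apply Definition \ref{scale extremal} with a variation $(\eta,\delta)$ where $\eta\in H^\beta(I,\mathbb{R})\cap C^1_\Box([a,b],\mathbb{R})$ is now arbitrary at $a$, and $\delta\in\mathbb{R}$. Differentiating with respect to $\varepsilon$, using the chain rule inside the integral and the fundamental theorem for the moving upper limit $\widetilde T+\varepsilon\delta$, I obtain exactly the expression appearing in line three of \eqref{provaE-L0}. Next I would invoke the scale integration by parts formula (Theorem \ref{integration by parts}) on the term $\tfrac{\partial L}{\partial v}[\widetilde y]\cdot\tfrac{\Box\eta}{\Box t}$; the hypothesis
\[
\lim_{h\to 0}\int_a^{\widetilde T}\!\left(\tfrac{\Box_h}{\Box t}\bigl(\tfrac{\partial L}{\partial v}[\widetilde y]\cdot\eta\bigr)(t)\right)_E dt=0
\]
is exactly what is needed to legitimise this step, and the H\"older exponent condition $\alpha+\beta>1$ together with $\tfrac{\partial L}{\partial v}[\widetilde y]\in H^\alpha$, $\eta\in H^\beta$ ensures the Leibniz rule applies. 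The output is the equation
\[
\int_a^{\widetilde T}\!\left[\tfrac{\partial L}{\partial y}[\widetilde y]-\tfrac{\Box}{\Box t}\!\left(\tfrac{\partial L}{\partial v}[\widetilde y]\right)\right]\eta\,dt
+\tfrac{\partial L}{\partial v}[\widetilde y](\widetilde T)\,\eta(\widetilde T)
-\tfrac{\partial L}{\partial v}[\widetilde y](a)\,\eta(a)
+L[\widetilde y](\widetilde T)\,\delta=0,
\]
which must hold for all admissible $(\eta,\delta)$. Note the crucial new term $-\tfrac{\partial L}{\partial v}[\widetilde y](a)\,\eta(a)$, absent in Theorem \ref{E-L+TC} precisely because there $\eta(a)=0$ was imposed.

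Now I peel off the four conditions by successively specialising the test variations. First, restricting to $\eta$ with $\eta(a)=\eta(\widetilde T)=0$ and $\delta=0$ kills every boundary contribution and yields, via the fundamental lemma of the calculus of variations, the scale Euler--Lagrange equation (condition 1). Substituting this back eliminates the integral, leaving
\[
\tfrac{\partial L}{\partial v}[\widetilde y](\widetilde T)\,\eta(\widetilde T)-\tfrac{\partial L}{\partial v}[\widetilde y](a)\,\eta(a)+L[\widetilde y](\widetilde T)\,\delta=0.
\]
Choosing $\eta(\widetilde T)=0$, $\delta=0$, and $\eta(a)$ arbitrary produces condition 2; choosing $\eta(a)=0$, $\delta=0$, and $\eta(\widetilde T)$ arbitrary produces condition 3; choosing $\eta(a)=\eta(\widetilde T)=0$ and $\delta$ arbitrary produces condition 4. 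The converse direction is immediate: if all four conditions hold, then reading the same chain of equalities backwards shows that the first variation vanishes for every admissible $(\eta,\delta)$, so $(\widetilde y,\widetilde T)$ is a scale extremal.

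The main (minor) obstacle is purely bookkeeping: I need to make sure that for each of the four specialisations there actually exists an admissible test function $\eta\in H^\beta(I,\mathbb{R})\cap C^1_\Box([a,b],\mathbb{R})$ realising the prescribed boundary values at $a$ and $\widetilde T$ (and still satisfying the $E$-projection hypothesis required to apply integration by parts). Since no constraint of the form $\eta(a)=0$ is imposed now, any smooth bump-type function works, and this is the same richness argument tacitly used in Theorem \ref{E-L+TC}. Beyond this, the proof is a direct transcription and requires no new ideas.
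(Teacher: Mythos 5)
Your proposal is correct and follows exactly the route the paper intends: the paper itself omits the proof, stating only that it is obtained ``doing similar calculations as done in the proof of Theorem \ref{E-L+TC}'', and your argument is precisely that adaptation, with the single new feature being the surviving boundary term $-\frac{\partial L}{\partial v}[\widetilde{y}](a)\cdot\eta(a)$ coming from integration by parts once the constraint $\eta(a)=0$ is dropped, which yields the extra natural boundary condition at $t=a$. The successive specialisation of $(\eta,\delta)$ to isolate each of the four conditions, and the immediate converse, match the paper's method.
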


\begin{Cor}{\rm (\cite{Almeida2})} Suppose that $T$ is fixed in problem \eqref{P} and that
the boundary conditions $y(a)=y_a$ and $y(T)=y_T$ are not present.
Let $\mathcal{D}= \{y \in H^\alpha (I, \mathbb{R}):  y \in C^1_{\Box}([a,b], \mathbb{C})\},$
and  $\widetilde{y} \in \mathcal{D}$ be such that $\displaystyle\frac{\partial L}{\partial v}[\widetilde{y}] \in H^\alpha(I,\mathbb{C})$, and
$$
\lim_{h \to 0} \int_a^{T}\left( \frac{\Box_h}{\Box t}\left(\frac{\partial L}{\partial v}[\widetilde{y}]\cdot \eta  \right)(t) \right)_E \, dt =0
$$
for all $\eta \in H^\beta(I,\mathbb{R}) \cap C^1_{\Box}([a,b], \mathbb{R})$.
Then $\widetilde{y}$ is a scale extremal of functional $\mathcal{I}$ in the class $\mathcal{D}$
if and only if the following conditions hold:
\begin{enumerate}
\item $\displaystyle\frac{\partial L}{\partial y}[\widetilde{y}](t)
=\frac{\Box}{\Box t}\left(\frac{\partial L}{\partial v}[\widetilde{y}]\right)(t)$ for all $t\in [a,T]$;
\item $\displaystyle\frac{\partial L}{\partial v}\left( a,\widetilde{y}(a),\frac{\Box \widetilde{y}}{\Box t}(a) \right)=0$;
\item $\displaystyle\frac{\partial L}{\partial v}\left( T,\widetilde{y}(T),\frac{\Box \widetilde{y}}{\Box t}(T) \right)=0$.
\end{enumerate}
\end{Cor}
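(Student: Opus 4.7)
The plan is to mirror the proof of Theorem \ref{E-L+TC} while exploiting the fact that the admissible variations $\eta$ are now unrestricted at $t=a$ (no initial condition is imposed on $\widetilde{y}$), so that an additional transversality condition at $a$ will emerge. Assume $(\widetilde{y},\widetilde{T})$ is a scale extremal and pick any $\eta \in H^\beta(I,\mathbb{R}) \cap C^1_{\Box}([a,b],\mathbb{R})$ and any $\delta \in \mathbb{R}$. Differentiating $\mathcal{I}[\widetilde{y}+\varepsilon\eta, \widetilde{T}+\varepsilon\delta]$ with respect to $\varepsilon$ at $\varepsilon=0$, using the Leibniz rule for integrals to handle the variable upper limit, and applying the chain rule to the Lagrangian, yields
$$
\int_a^{\widetilde{T}}\!\left[\frac{\partial L}{\partial y}[\widetilde{y}](t)\cdot\eta(t)+\frac{\partial L}{\partial v}[\widetilde{y}](t)\cdot \frac{\Box \eta}{\Box t}(t)\right]dt+L\!\left(\widetilde{T},\widetilde{y}(\widetilde{T}),\tfrac{\Box \widetilde{y}}{\Box t}(\widetilde{T})\right)\cdot\delta=0.
$$

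Next I would invoke the scale integration by parts formula (Theorem \ref{integration by parts}), which is justified by the hypothesis on the limit of the complementary term applied to the product $\frac{\partial L}{\partial v}[\widetilde{y}] \cdot \eta$. This converts the equation into
$$
\int_a^{\widetilde{T}}\!\left[\frac{\partial L}{\partial y}[\widetilde{y}](t)-\frac{\Box}{\Box t}\!\left(\frac{\partial L}{\partial v}[\widetilde{y}]\right)(t)\right]\eta(t)\,dt+\frac{\partial L}{\partial v}[\widetilde{y}](\widetilde{T})\,\eta(\widetilde{T})-\frac{\partial L}{\partial v}[\widetilde{y}](a)\,\eta(a)+L[\widetilde{y}](\widetilde{T})\,\delta=0.
$$
From here I would extract the four conditions one at a time by tailoring the variations. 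First restrict to $\eta(a)=\eta(\widetilde{T})=0$ and $\delta=0$; by the fundamental lemma of the calculus of variations (using that the bracket is continuous on $[a,\widetilde{T}]$), the Euler--Lagrange equation (condition 1) holds pointwise on $[a,\widetilde{T}]$. Substituting this back collapses the remaining identity to a boundary expression in $\eta(a)$, $\eta(\widetilde{T})$, and $\delta$.

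Finally, to isolate the three transversality conditions, I would exploit the independence of these three free data. Taking $\delta$ arbitrary with $\eta$ vanishing at both $a$ and $\widetilde{T}$ gives $L[\widetilde{y}](\widetilde{T})=0$ (condition 4). Taking $\delta=0$ and $\eta(a)=0$ with $\eta(\widetilde{T})$ free yields $\frac{\partial L}{\partial v}[\widetilde{y}](\widetilde{T})=0$ (condition 3). Taking $\delta=0$ and $\eta(\widetilde{T})=0$ with $\eta(a)$ free gives $\frac{\partial L}{\partial v}[\widetilde{y}](a)=0$ (condition 2); this is the new condition made possible by dropping the constraint $\eta(a)=0$ of Theorem \ref{E-L+TC}. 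For the converse, if all four conditions hold, reading the integration-by-parts identity backward shows that the variation vanishes for every admissible $(\eta,\delta)$, so $(\widetilde{y},\widetilde{T})$ is a scale extremal. The main technical obstacle is not conceptual but rather administrative: one must verify that for each class of restricted variations there genuinely exist functions in $H^\beta(I,\mathbb{R}) \cap C^1_{\Box}([a,b],\mathbb{R})$ with the prescribed boundary values (to apply the fundamental lemma and to freely assign $\eta(a)$ and $\eta(\widetilde{T})$), which is standard since smooth bump functions lie in every H\"olderian class.
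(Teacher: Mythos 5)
Your computation is the right one in spirit, but you have proved the wrong statement: the Corollary explicitly assumes that the terminal point $T$ is \emph{fixed}, so the admissible variations are $(\widetilde{y}+\varepsilon\eta, T)$ only --- there is no increment $\varepsilon\delta$ of the upper limit. By introducing an arbitrary $\delta\in\mathbb{R}$ you are reproving Theorem~\ref{E-L+TC 2} (the free--terminal--time version) rather than its fixed--$T$ corollary. Concretely, the condition you label as ``condition 4'', namely $L\bigl(T,\widetilde{y}(T),\frac{\Box \widetilde{y}}{\Box t}(T)\bigr)=0$, is not part of the Corollary and is \emph{false} in general when $T$ is fixed (already in the classical $C^1$ setting a fixed-endpoint extremal has no reason to make the Lagrangian vanish at the endpoint). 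This also breaks your converse: you assume ``all four conditions hold'' to read the identity backward, but the Corollary only supplies three; the argument is rescued only by observing that with $T$ fixed the term $L[\widetilde{y}](T)\cdot\delta$ never appears, so the three stated conditions already annihilate the first variation.

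Apart from this, your derivation of conditions 1--3 is exactly the paper's intended route: the paper obtains the Corollary by repeating the computation of Theorem~\ref{E-L+TC} with the $\delta$-term suppressed and with $\eta(a)$ no longer forced to vanish, so that the scale integration by parts (Theorem~\ref{integration by parts}) leaves the two boundary terms $\frac{\partial L}{\partial v}[\widetilde{y}](T)\,\eta(T)-\frac{\partial L}{\partial v}[\widetilde{y}](a)\,\eta(a)$, from which the fundamental lemma and suitable choices of $\eta$ give the Euler--Lagrange equation and the two natural boundary conditions at $a$ and $T$. To repair your proof, simply delete every occurrence of $\delta$ and of the endpoint term $L[\widetilde{y}](T)\cdot\delta$, and run the same case analysis on $\eta(a)$ and $\eta(T)$.
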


\vspace{0.3 cm}

In the proof of Theorem \ref{E-L+TC} we proved that  $(\widetilde{y},\widetilde{T})$ is a scale extremal of problem  (\ref{P}), if and only if,
for arbitrary $\eta \in H^\beta(I,\mathbb{R}) \cap C^1_{\Box}([a,b], \mathbb{R})$ such that $\eta(a)=0$,  and $\delta \in \mathbb{R}$,

\begin{equation}
\label{*}
\displaystyle \int_a^{\widetilde{T}} \left[ \frac{\partial L}{\partial y}[\widetilde{y}](t) -  \frac{\Box }{\Box t}\left(\frac{\partial L}{\partial v}[\widetilde{y}]\right)(t)\right]\cdot \eta(t) \, dt
+ \displaystyle \frac{\partial L}{\partial v}[\widetilde{y}](\widetilde{T})\cdot \eta(\widetilde{T})
+  \displaystyle L\left( \widetilde{T},\widetilde{y}(\widetilde{T}),\frac{\Box \widetilde{y}}{\Box t}(\widetilde{T}) \right)\cdot \delta =0.
\end{equation}

In what follows we will write $\eta(\widetilde{T})$ in terms of $\delta$, the increment over time, and the increment over space,
$$
\delta\widetilde{y}_{\widetilde{T}}:=(\widetilde{y}+\eta)(\widetilde{T}+\delta)-\widetilde{y}(\widetilde{T}).
$$

Assuming that $\widetilde{y}, \eta \in C^2_{\Box}([a,b],\mathbb{R})$ and for those $\eta$ such that $\frac{\Box \eta}{\Box t}(\widetilde{T})=0$, and $|\delta| \ll 1$, then, by Theorem \ref{Taylor}, we deduce that
$$(\widetilde{y}+\eta)(\widetilde{T}+\delta)-(\widetilde{y}+\eta)(\widetilde{T})=\frac{\Box \widetilde{y}}{\Box t}(\widetilde{T})\cdot \delta+O(\delta^2)$$
and so we obtain the formula for the increment over space
$$\delta\widetilde{y}_{\widetilde{T}}=\frac{\Box \widetilde{y}}{\Box t}(\widetilde{T})\cdot \delta+\eta(\widetilde{T})+O(\delta^2)$$
which is equivalent to
\begin{equation}
\label{transv1}\eta(\widetilde{T})=\delta\widetilde{y}_{\widetilde{T}}-\frac{\Box \widetilde{y}}{\Box t}(\widetilde{T})\cdot \delta+O(\delta^2).
\end{equation}

Substituting (\ref{E-L equation}) and (\ref{transv1}) into  (\ref{*}) we get
 \begin{equation}\label{transv2}\delta\left[ L\left( \widetilde{T},\widetilde{y}(\widetilde{T}),\frac{\Box \widetilde{y}}{\Box t}(\widetilde{T}) \right)-  \frac{\partial L}{\partial v}[\widetilde{y}](\widetilde{T})\cdot \frac{\Box \widetilde{y}}{\Box t}(\widetilde{T})\right]
 +\delta\widetilde{y}_{\widetilde{T}}\cdot\frac{\partial L}{\partial v}[\widetilde{y}](\widetilde{T})+ O(\delta^2) =0.
\end{equation}

Depending on the constraints that may be imposed over the terminal point $T$ and/or over the boundary condition $y(T)$, several natural boundary conditions can be obtained from condition (\ref{transv2}). Obviously, if they are both fixed we do not get any extra condition (see \cite{CressonGreff}).

Next we consider the case where the boundary conditions $y(a)$ and $y(T)$ are fixed and $T$ is free.

\begin{Theorem}\label{E-L+TC III}
[The scale Euler--Lagrange equation and natural boundary conditions III]
In the conditions of Theorem \ref{E-L+TC},
the pair $(\widetilde{y},\widetilde{T})$ is a scale extremal of functional $\mathcal{I}$ defined on
$$\{y \in H^\alpha (I^2, \mathbb{R}): y(a)=y_a \wedge y(T)=y_T \wedge y \in C^2_{\Box}([a,b],\mathbb{R})\},$$
where $y_a, y_T \in \mathbb{R}$ are fixed real numbers,
if and only if the following conditions hold:\\

\begin{enumerate}
\item $\displaystyle\frac{\partial L}{\partial y}[\widetilde{y}](t)
=\frac{\Box}{\Box t}\left(\frac{\partial L}{\partial v}[\widetilde{y}]\right)(t)$ for all $t\in [a,\widetilde{T}]$;\\

\item $\displaystyle L\left( \widetilde{T},\widetilde{y}(\widetilde{T}),\frac{\Box \widetilde{y}}{\Box t}(\widetilde{T}) \right) =
\frac{\partial L}{\partial v}[\widetilde{y}](\widetilde{T})\cdot \frac{\Box \widetilde{y}}{\Box t}(\widetilde{T}).$

\end{enumerate}
\end{Theorem}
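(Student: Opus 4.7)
The plan is to follow the same overall strategy as in the proof of Theorem \ref{E-L+TC}, leveraging the fact that equation (\ref{*}) is the fundamental first-order necessary condition, valid for all admissible variations $(\eta,\delta)$, and then to exploit the constraint $y(T)=y_T$ to link $\eta(\widetilde{T})$ with $\delta$. The only substantive work is determining which variations are admissible in this new setting, because everything else has already been compiled in formulas (\ref{*})--(\ref{transv2}) and in the scale Taylor Theorem \ref{Taylor}.

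First, I would recover the scale Euler--Lagrange equation by restricting to variations with $\delta=0$ and $\eta(\widetilde{T})=0$. Such pairs are admissible: the endpoint stays at $\widetilde{T}$, and both endpoint values $y(a)=y_a$ and $y(\widetilde{T})=y_T$ are preserved. The identity (\ref{*}) then collapses to $\int_a^{\widetilde{T}}\bigl[\frac{\partial L}{\partial y}[\widetilde{y}](t)-\frac{\Box}{\Box t}(\frac{\partial L}{\partial v}[\widetilde{y}])(t)\bigr]\cdot\eta(t)\,dt=0$, and the fundamental lemma of the calculus of variations yields condition (1) exactly as in Theorem \ref{E-L+TC}.

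The crux is condition (2). Because admissible curves satisfy $y(T)=y_T$ for \emph{their own} terminal point, both $\widetilde{y}(\widetilde{T})$ and $(\widetilde{y}+\varepsilon\eta)(\widetilde{T}+\varepsilon\delta)$ equal $y_T$, so the space increment $\delta\widetilde{y}_{\widetilde{T}}$ vanishes. Substituting $\delta\widetilde{y}_{\widetilde{T}}=0$ in the transversality identity (\ref{transv2}), which was derived precisely using the scale Taylor expansion (Theorem \ref{Taylor}), leaves
\[
\delta\Bigl[L\!\left(\widetilde{T},\widetilde{y}(\widetilde{T}),\tfrac{\Box \widetilde{y}}{\Box t}(\widetilde{T})\right)-\tfrac{\partial L}{\partial v}[\widetilde{y}](\widetilde{T})\cdot\tfrac{\Box \widetilde{y}}{\Box t}(\widetilde{T})\Bigr]+O(\delta^2)=0.
\]
Dividing by $\delta$ and letting $\delta\to 0$ (using that $\delta\in\mathbb{R}$ is arbitrary, and that the construction is valid for $|\delta|\ll 1$ in the hypotheses of Theorem \ref{Taylor}) produces condition (2). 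For the sufficiency direction, assuming (1) and (2), reverse the reasoning: in the expansion (\ref{provaE-L0}) the integrand vanishes by (1), the boundary term combined with $L\cdot\delta$ recombines via (\ref{transv1}) and $\delta\widetilde{y}_{\widetilde{T}}=0$ into the right-hand side of (2) times $\delta$, which vanishes up to $O(\delta^2)$; this yields stationarity of $\mathcal{I}$ along every admissible perturbation.

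The main obstacle, therefore, is the bookkeeping around admissibility: one has to verify that the pair $(\eta,\delta)$ with $\eta(\widetilde{T})=-\frac{\Box\widetilde{y}}{\Box t}(\widetilde{T})\cdot\delta+O(\delta^2)$ genuinely yields a curve in the admissible class (this is why the statement requires $\widetilde{y}\in C^2_{\Box}$ and variations $\eta\in C^2_\Box$, so that Theorem \ref{Taylor} applies both to $\widetilde{y}$ and to $\widetilde{y}+\varepsilon\eta$), and that the residual $O(\delta^2)$ in (\ref{transv2}) can be discarded after dividing by $\delta$. Once this is granted, the theorem reduces to rereading (\ref{*}) and (\ref{transv2}) with the single additional input $\delta\widetilde{y}_{\widetilde{T}}=0$.
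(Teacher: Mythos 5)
Your proposal is correct and follows essentially the same route as the paper: the paper's own proof simply observes that the constraint $y(T)=y_T$ forces $\delta\widetilde{y}_{\widetilde{T}}=0$, substitutes this into the transversality identity \eqref{transv2}, and invokes the arbitrariness of $\delta$ to obtain condition (2), with the Euler--Lagrange equation (1) inherited from the argument of Theorem \ref{E-L+TC} exactly as you describe. Your additional remarks on admissibility of the variations and on the sufficiency direction are consistent with (and slightly more explicit than) the paper's treatment.
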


\begin{proof}
In this case, $\delta\widetilde{y}_{\widetilde{T}}=0$ and $\delta$ is arbitrary. Thus from  (\ref{transv2}) we obtain the condition
$$ L\left( \widetilde{T},\widetilde{y}(\widetilde{T}),\frac{\Box \widetilde{y}}{\Box t}(\widetilde{T}) \right)=\frac{\partial L}{\partial v}[\widetilde{y}](\widetilde{T})\cdot \frac{\Box \widetilde{y}}{\Box t}(\widetilde{T}).$$

\end{proof}

Now we shall generalize Theorem \ref{E-L+TC III} considering the case where we have the boundary condition $\widetilde{y}(T)=\psi(T)$,
where $\psi$ is a given function of
class $C^2_{\Box}([a,b],\mathbb{R})$, and $T$ is free.

\begin{Theorem}
[The scale Euler--Lagrange equation and natural boundary conditions IV]
Let $\psi$ be a given function of class $C^2_{\Box}([a,b],\mathbb{R})$.
In the conditions of Theorem \ref{E-L+TC},
the pair $(\widetilde{y},\widetilde{T})$ is a scale extremal of functional $\mathcal{I}$ defined on
$$\{y \in H^\alpha (I^2, \mathbb{R}): y(a)=y_a \wedge y(T)=\psi(T) \wedge y \in C^2_{\Box}([a,b],\mathbb{R})\}$$
if and only if the following conditions hold:\\

\begin{enumerate}
\item $\displaystyle\frac{\partial L}{\partial y}[\widetilde{y}](t)
=\frac{\Box}{\Box t}\left(\frac{\partial L}{\partial v}[\widetilde{y}]\right)(t)$ for all $t\in [a,\widetilde{T}]$;\\

\item $\displaystyle L\left( \widetilde{T},\widetilde{y}(\widetilde{T}),\frac{\Box \widetilde{y}}{\Box t}(\widetilde{T}) \right) =
\frac{\partial L}{\partial v}[\widetilde{y}](\widetilde{T})\cdot \left(\frac{\Box \widetilde{y}}{\Box t}(\widetilde{T})-\frac{\Box \psi}{\Box t}(\widetilde{T})\right).$
\end{enumerate}
\end{Theorem}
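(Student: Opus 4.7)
The plan is to adapt the argument already used for Theorem \ref{E-L+TC III}, starting from the key identity (\ref{transv2}), which was derived under the same assumptions on $\widetilde{y}$ and $\eta$ and does not use any constraint on $y(T)$. The novelty here is that $\widetilde{y}(T)$ is no longer free nor fixed, but tied to the curve $\psi$ by $\widetilde{y}(T) = \psi(T)$; this will translate into a specific expression for the increment over space $\delta\widetilde{y}_{\widetilde{T}}$, which, once substituted into (\ref{transv2}), yields the new transversality condition.

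\textbf{Step 1 (Euler--Lagrange).} Exactly as in the proof of Theorem \ref{E-L+TC}, I restrict attention to variations $\eta \in H^\beta(I,\mathbb{R}) \cap C^2_{\Box}([a,b],\mathbb{R})$ satisfying $\eta(a) = 0$, $\eta(\widetilde{T}) = 0$, $\frac{\Box \eta}{\Box t}(\widetilde{T}) = 0$, together with $\delta = 0$. Admissibility is preserved because $(\widetilde{y}+\varepsilon\eta)(a) = y_a$ and $(\widetilde{y}+\varepsilon\eta)(\widetilde{T}) = \psi(\widetilde{T})$ automatically. From (\ref{transv2}) (or directly from the computation leading to it) only the integral term survives, and the fundamental lemma of the calculus of variations delivers item~1, the scale Euler--Lagrange equation on $[a,\widetilde{T}]$.

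\textbf{Step 2 (transversality).} To exploit the constraint $\widetilde{y}(T) = \psi(T)$, I note that for an admissible perturbation we must have $(\widetilde{y} + \eta)(\widetilde{T} + \delta) = \psi(\widetilde{T} + \delta)$, so that
\[
\delta\widetilde{y}_{\widetilde{T}} \; = \; (\widetilde{y}+\eta)(\widetilde{T}+\delta) - \widetilde{y}(\widetilde{T}) \; = \; \psi(\widetilde{T}+\delta) - \psi(\widetilde{T}).
\]
Since $\psi \in C^2_{\Box}([a,b],\mathbb{R})$, the scale Taylor Theorem (Theorem \ref{Taylor}), applied to $\psi$ at the point $\widetilde{T}$, gives
\[
\psi(\widetilde{T}+\delta) - \psi(\widetilde{T}) \;=\; \frac{\Box \psi}{\Box t}(\widetilde{T}) \cdot \delta + O(\delta^2).
\]
Substituting the resulting expression $\delta\widetilde{y}_{\widetilde{T}} = \frac{\Box \psi}{\Box t}(\widetilde{T}) \cdot \delta + O(\delta^2)$ into (\ref{transv2}), after having cancelled the integral via the scale Euler--Lagrange equation from Step~1, yields
\[
\delta \left[ L\!\left(\widetilde{T},\widetilde{y}(\widetilde{T}),\frac{\Box \widetilde{y}}{\Box t}(\widetilde{T})\right) - \frac{\partial L}{\partial v}[\widetilde{y}](\widetilde{T}) \cdot \left( \frac{\Box \widetilde{y}}{\Box t}(\widetilde{T}) - \frac{\Box \psi}{\Box t}(\widetilde{T}) \right) \right] + O(\delta^2) = 0.
\]
Dividing by $\delta$ and letting $\delta \to 0$ (using the arbitrariness of $\delta \in \mathbb{R}$) produces item~2.

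\textbf{Step 3 (sufficiency).} The reverse implication is immediate: if items 1 and 2 hold, then reading (\ref{transv2}) (up to the $O(\delta^2)$ term) backwards shows that the first variation vanishes for every admissible $(\eta,\delta)$, so $(\widetilde{y},\widetilde{T})$ is a scale extremal.

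The main delicate point, as in the preceding transversality results of the paper, is the legitimate use of the scale Taylor expansion for $\psi$: one must verify that $\psi$ satisfies the integral hypotheses of Theorem \ref{Taylor} so that the remainder really is $O(\delta^2)$, and that the admissibility of $\widetilde{y} + \varepsilon \eta$ together with the boundary constraint $(\widetilde{y}+\eta)(\widetilde{T}+\delta)=\psi(\widetilde{T}+\delta)$ is compatible with the freedom of choice of $\eta$ on $[a,\widetilde{T})$. These are imposed through the regularity assumption $\psi \in C^2_{\Box}([a,b],\mathbb{R})$ and through the admissible class, exactly as in Theorem \ref{E-L+TC III}.
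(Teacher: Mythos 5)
Your proposal is correct and follows essentially the same route as the paper: the paper's proof likewise uses the constraint $y(T)=\psi(T)$ together with the scale Taylor Theorem to write $\delta\widetilde{y}_{\widetilde{T}}=\frac{\Box \psi}{\Box t}(\widetilde{T})\cdot\delta+O(\delta^2)$ and substitutes this into (\ref{transv2}), invoking the arbitrariness of $\delta$. Your added detail on recovering the Euler--Lagrange equation and on the sufficiency direction only makes explicit what the paper leaves implicit from the discussion preceding Theorem \ref{E-L+TC III}.
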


\begin{proof}
%The terminal point belongs to a given curve $\psi$, in the sense that $\widetilde{y}(\widetilde{T})=\psi(\widetilde{T})$. Then, proceeding with Taylor's formula,
From Theorem \ref{Taylor} we can conclude that
\begin{equation}\label{transv3}
\begin{array}{ll}
\delta\widetilde{y}_{\widetilde{T}}&=\displaystyle\psi(\widetilde{T}+\delta)-\psi(\widetilde{T})\\
&\displaystyle =\frac{\Box \psi}{\Box t}(\widetilde{T})\cdot \delta+O(\delta^2).
\end{array}
\end{equation}
Replacing \eqref{transv3} into  \eqref{transv2}, the arbitrariness of $\delta$ allows us to deduce the condition
$$L\left( \widetilde{T},\widetilde{y}(\widetilde{T}),\frac{\Box \widetilde{y}}{\Box t}(\widetilde{T}) \right)= \frac{\partial L}{\partial v}[\widetilde{y}](\widetilde{T})\cdot \left(\frac{\Box \widetilde{y}}{\Box t}(\widetilde{T})- \frac{\Box \psi}{\Box t}(\widetilde{T})\right).$$
\end{proof}

%We consider three different cases as described below.
%
%\textbf{Case A}: $\widetilde{T}$ is fixed and $\widetilde{y}(\widetilde{T})$ is free:
%
%In such case, $\delta=0$ and by the arbitrariness of  $\delta\widetilde{y}_{\widetilde{T}}$, we get
%$$\frac{\partial L}{\partial v}[\widetilde{y}](\widetilde{T})=0.$$
%
%\textbf{Case B}: $\widetilde{y}(\widetilde{T})$ is fixed and $\widetilde{T}$ is free:
%
%Now, $\delta\widetilde{y}_{\widetilde{T}}=0$ and $\delta$ is arbitrary. Thus we obtain the condition
%$$ L\left( \widetilde{T},\widetilde{y}(\widetilde{T}),\frac{\Box \widetilde{y}}{\Box t}(\widetilde{T}) \right)=\frac{\partial L}{\partial v}[\widetilde{y}](\widetilde{T})\cdot \frac{\Box \widetilde{y}}{\Box t}(\widetilde{T}).$$
%
%\textbf{Case C}: The terminal point belongs to some curve $\psi$, in the sense that $\widetilde{y}(\widetilde{T})=\psi(\widetilde{T})$. Then, proceeding with Taylor's formula,
%\begin{equation}\label{transv3}
%\begin{array}{ll}
%\delta\widetilde{y}_{\widetilde{T}}&=\displaystyle\psi(\widetilde{T}+\delta)-\psi(\widetilde{T})\\
%&\displaystyle =\frac{\Box \psi}{\Box t}(\widetilde{T})\cdot \delta+O(\delta^2).
%\end{array}
%\end{equation}
%Replacing \eqref{transv3} into  \eqref{transv2}, the arbitrariness of $\delta$ allows us to deduce the condition
%$$L\left( \widetilde{T},\widetilde{y}(\widetilde{T}),\frac{\Box \widetilde{y}}{\Box t}(\widetilde{T}) \right)= \frac{\partial L}{\partial v}[\widetilde{y}](\widetilde{T})\left(\frac{\Box \widetilde{y}}{\Box t}(\widetilde{T})+\frac{\Box \psi}{\Box t}(\widetilde{T})\right).$$

\subsection{Higher-order variational problems}

In this section we  consider the following higher-order functional $\mathcal{J}$

\begin{equation}\label{functional J}
\mathcal{J}[y,T]=\displaystyle \int_a^T L\left(t, y(t), \frac{\Box y}{\Box t}(t), \cdots, \frac{\Box^n y}{\Box t^n}(t)\right) \, dt
\end{equation}

\noindent defined in the class

$$\mathcal{E}= \{y \in H^\alpha (I^n, \mathbb{R}): y(a)=y_a, \frac{\Box y}{\Box t}(a)=y^1_a , \cdots, \frac{\Box^{n-1} y}{\Box t^{n-1}}(a)=y^{n-1}_a
\wedge y \in C^n_\Box([a,b], \mathbb{R}) \},$$

\noindent where the Lagrangian $L=L(t, y, v_1, v_2, \cdots, v_n):[a,b]\times \mathbb{R} \times \mathbb{C}^n  \rightarrow \mathbb{C}$ is of class $C^1$,
$T\in \mathbb{R}$ is such that $a\leq T\leq b$,
and $y_a\in \mathbb{R}$, $y^1_a, \cdots, y^{n-1}_a \in \mathbb{C}$ are given fixed numbers.

\begin{Definition}\label{higher-order scale extremal}
We say that $(y,T)$ is a scale extremal of functional $\mathcal{J}$ defined on $\mathcal{E}$
if, for any $\eta \in H^\beta(I^n, \mathbb{R})\cap C^n_{\Box}([a,b], \mathbb{R})$ such that
$\eta(a)=\frac{\Box \eta}{\Box t}(a)= \cdots= \frac{\Box^{n-1} \eta}{\Box t^{n-1}}(a)=0$, and $\delta \in \mathbb{R}$,
$$
\frac{d}{d\varepsilon}\left.\mathcal{J}[y + \varepsilon \eta, T + \varepsilon \delta ]\right|_{\varepsilon = 0}=0.
$$
\end{Definition}

In order to simplify notations we will denote
$$[y]^n(t):= \left(t, y(t), \frac{\Box y}{\Box t}(t), \cdots, \frac{\Box^n y}{\Box t^n}(t)\right).$$

\begin{Theorem}[The higher-order scale Euler--Lagrange equation and natural boundary conditions]
\label{E-L+HTC}
Let $\widetilde{T}\in [a,b]$
and $\widetilde{y} \in \mathcal{E}$ be such that
\begin{enumerate}
\item $\displaystyle\frac{\partial L}{\partial v_i}[\widetilde{y}]^n \in H^\alpha(I^n,\mathbb{C})$ for all $i=1, 2, \cdots, n$,
\item for all $i=1, 2, \cdots, n$ and $k=0, 1,\cdots, i-1$,
$$\lim_{h \to 0} \int_a^{\widetilde{T}}\left( \frac{\Box_h}{\Box t}\left(\frac{\Box^k}{\Box t^k}(\frac{\partial L}{\partial v_i}[\widetilde{y}]^n)\cdot \frac{\Box^{i-k-1} \eta}{\Box t^{i-k-1}} \right)(t) \right)_E \, dt =0,$$
for all $\eta \in H^\beta(I^n,\mathbb{R}) \cap C^n_{\Box}([a,b], \mathbb{R})$ such that $\eta(a)=0, \frac{\Box \eta}{\Box t}(a)=0, \cdots, \frac{\Box^{n-1} \eta}{\Box t^{n-1}}(a)=0 $.
\end{enumerate}
The pair $(\widetilde{y},\widetilde{T})$ is a scale extremal of functional $\mathcal{J}$  on the class $\mathcal{E}$
if and only if the following conditions hold:
\begin{enumerate}
\item $\displaystyle\frac{\partial L}{\partial y}[\widetilde{y}]^n(t)
+ \sum_{i=1}^n (-1)^i \frac{\Box^i}{\Box t^i}\left(\frac{\partial L}{\partial v_i}\right)[\widetilde{y}]^n(t)=0$ for all $t\in [a,\widetilde{T}]$;
\item  $\displaystyle \sum_{k=i}^n (-1)^{k-i} \frac{\Box^{k-i}}{\Box t^{k-i}} \left(\frac{\partial L}{\partial v_k}\right) [\widetilde{y}]^n(\widetilde{T}) =0, \quad \forall i=1, 2, \cdots, n;$
\item $ \displaystyle L[\widetilde{y}]^n(\widetilde{T}) =0$.
\end{enumerate}
\end{Theorem}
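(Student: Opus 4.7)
The plan is to mimic the proof of Theorem \ref{E-L+TC} but with $n$ iterated applications of the scale integration by parts formula. First I would compute the first variation: differentiating under the integral sign and accounting for the variable upper limit $\widetilde T + \varepsilon \delta$ yields
\begin{equation*}
0 = \int_a^{\widetilde T}\left[\frac{\partial L}{\partial y}[\widetilde y]^n \cdot \eta + \sum_{i=1}^n \frac{\partial L}{\partial v_i}[\widetilde y]^n \cdot \frac{\Box^i \eta}{\Box t^i}\right] dt + L[\widetilde y]^n(\widetilde T)\cdot \delta.
\end{equation*}

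Next I would apply Theorem \ref{integration by parts} exactly $i$ times to each term $\int_a^{\widetilde T}\frac{\partial L}{\partial v_i}[\widetilde y]^n \cdot \frac{\Box^i \eta}{\Box t^i}\, dt$, writing $\frac{\Box^i \eta}{\Box t^i} = \frac{\Box}{\Box t}(\frac{\Box^{i-1}\eta}{\Box t^{i-1}})$ and iterating. Hypothesis (2) of the theorem is precisely what is needed to legalize each of these integrations by parts. This produces, for each $i$, the identity
\begin{equation*}
\int_a^{\widetilde T}\frac{\partial L}{\partial v_i}[\widetilde y]^n \cdot \frac{\Box^i \eta}{\Box t^i}\, dt = \sum_{k=0}^{i-1}(-1)^k\left[\frac{\Box^k}{\Box t^k}\!\left(\frac{\partial L}{\partial v_i}[\widetilde y]^n\right)\cdot\frac{\Box^{i-k-1}\eta}{\Box t^{i-k-1}}\right]_a^{\widetilde T} + (-1)^i\int_a^{\widetilde T}\frac{\Box^i}{\Box t^i}\!\left(\frac{\partial L}{\partial v_i}[\widetilde y]^n\right)\cdot \eta\, dt.
\end{equation*}
The imposed initial conditions $\frac{\Box^j \eta}{\Box t^j}(a)=0$ for $j=0,\dots,n-1$ kill all boundary terms at $t=a$. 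After summing over $i$ and reindexing the double sum by $j=i-k-1$ (so $j$ ranges from $0$ to $n-1$ and the inner index runs from the new lower bound up to $n$), the variation becomes
\begin{equation*}
0 = \int_a^{\widetilde T}\!\left[\frac{\partial L}{\partial y}[\widetilde y]^n + \sum_{i=1}^n(-1)^i\frac{\Box^i}{\Box t^i}\!\left(\frac{\partial L}{\partial v_i}\right)\![\widetilde y]^n\right]\!\eta\, dt + \sum_{j=0}^{n-1}\frac{\Box^j \eta}{\Box t^j}(\widetilde T)\cdot A_{j+1} + L[\widetilde y]^n(\widetilde T)\cdot\delta,
\end{equation*}
where $A_i := \sum_{k=i}^n(-1)^{k-i}\frac{\Box^{k-i}}{\Box t^{k-i}}(\frac{\partial L}{\partial v_k})[\widetilde y]^n(\widetilde T)$ is exactly the left-hand side of condition~2.

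Finally, I would extract the three conditions by choosing the admissible pair $(\eta,\delta)$ judiciously. Restricting first to $\delta=0$ and to those $\eta$ vanishing to order $n-1$ at both endpoints, the fundamental lemma of the calculus of variations delivers the higher-order scale Euler--Lagrange equation (condition 1). Substituting back, what remains is $\sum_{j=0}^{n-1} \frac{\Box^j \eta}{\Box t^j}(\widetilde T)\cdot A_{j+1} + L[\widetilde y]^n(\widetilde T)\cdot\delta = 0$ for all admissible $(\eta,\delta)$. Choosing $\eta$ that vanishes to order $n-1$ at $\widetilde T$ but with $\delta\ne 0$ gives condition 3; and then, for each $j=0,\dots,n-1$ separately, choosing an admissible $\eta$ that has $\frac{\Box^j \eta}{\Box t^j}(\widetilde T)\ne 0$ while all the other $\frac{\Box^{j'}\eta}{\Box t^{j'}}(\widetilde T)=0$ (with $\delta=0$) yields $A_{j+1}=0$, i.e.\ condition 2. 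The converse direction is immediate from the displayed form of the first variation.

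The main obstacle is the bookkeeping: correctly tracking the signs and indices under $n$ successive integrations by parts, reindexing $(i,k)\leftrightarrow(j,i)$ so that the boundary terms organize into the compact form $A_{j+1}$, and then constructing, for each $j$, a variation $\eta\in H^\beta(I^n,\mathbb{R})\cap C^n_\Box([a,b],\mathbb{R})$ which realizes any prescribed values of $\frac{\Box^j \eta}{\Box t^j}(\widetilde T)$ subject to vanishing of order $n-1$ at $a$ (and, where required, at $\widetilde T$). This latter construction is standard in higher-order variational calculus and can be carried out with smooth bump functions supported near $\widetilde T$, which are H\"olderian of any order and lie in $C^n_\Box$.
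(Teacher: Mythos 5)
Your proposal follows essentially the same route as the paper's proof: compute the first variation with the variable upper limit, iterate the scale integration by parts formula (with hypothesis 2 legalizing each step), invoke the fundamental lemma for the Euler--Lagrange equation, and then isolate the natural boundary conditions with suitably chosen $(\eta,\delta)$. Your explicit reindexing of the boundary terms into the quantities $A_i$ is in fact more detailed than the paper, which leaves that bookkeeping and the choice of variations to the reader.
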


\begin{proof}
Suppose that $(\widetilde{y},\widetilde{T})$ is a scale extremal of functional $\mathcal{J}$  on the class $\mathcal{E}$.
For variation functions, we consider $(\widetilde{y} + \varepsilon \eta, \widetilde{T} + \varepsilon \delta)$, with $\eta$ be such that
$$\eta(a)=\frac{\Box \eta}{\Box t}(a)= \cdots= \frac{\Box^{n-1} \eta}{\Box t^{n-1}}(a)=0.$$

\noindent
Using the definition of scale extremal, we conclude that
$$
\displaystyle \int_a^{\widetilde{T}} \left( \frac{\partial L}{\partial y}[\widetilde{y}]^n (t) \cdot \eta(t)+
\sum_{i=1}^n \frac{\partial L}{\partial v_i}[\widetilde{y}]^n (t)\cdot \frac{\Box^i \eta}{\Box t^i}(t) \right) dt
+  \displaystyle L[\widetilde{y}]^n (\widetilde{T})\cdot \delta =0.
$$
\noindent
Applying the integration by parts formula, we get

\begin{equation*}
\begin{split}
& \displaystyle \int_a^{\widetilde{T}} \left( \frac{\partial L}{\partial y}[\widetilde{y}]^n (t)
+
\sum_{i=1}^n (-1)^i \frac{\Box^{i}}{\Box t^{i}}(\frac{\partial L}{\partial v_i})[\widetilde{y}]^n (t)\right) \cdot \eta(t) dt
\\
& + \sum_{i=1}^n  \left(\frac{\partial L}{\partial v_i}[\widetilde{y}]^n (\widetilde{T})\cdot \frac{\Box^{i-1} \eta}{\Box t^{i-1}}(\widetilde{T})  +
\sum_{k=1}^{i-1}(-1)^k \frac{\Box^{k}}{\Box t^{k}}(\frac{\partial L}{\partial v_i})[\widetilde{y}]^n (\widetilde{T})\cdot \frac{\Box^{i-1-k} \eta}{\Box t^{i-1-k}}(\widetilde{T}) \right)
+  \displaystyle L[\widetilde{y}]^n (\widetilde{T})\cdot \delta =0.
\end{split}
\end{equation*}

Considering $\delta=0$ and $\eta(\widetilde{T})=\frac{\Box \eta}{\Box t}(\widetilde{T})= \cdots= \frac{\Box^{n-1} \eta}{\Box t^{n-1}}(\widetilde{T})=0$ we obtain the higher-order scale Euler-Lagrange equation.
Similarly as done in the proof of Theorem \ref{E-L+TC}, for appropriate variations, we obtain the pretended natural boundary conditions.

\end{proof}

Clearly, all the results presented in Subsection 3.1 can be generalized for higher-order variational problems.

% --------------------------------------------------

\section{Conclusions}

In the present paper we study variational problems when the dynamic of the trajectories are nondifferentiable. To overcome this situation, we considered the scale derivative as presented by Cresson and Greff in \cite{CressonGreff} and \cite{Cresson1}, which has shown some applications in physics, e.g., trajectories of quantum mechanics, fractals and scale-relativity theory. The main aim was to find necessary and sufficient conditions that a pair $(y,T)$ must satisfy in order to be an extremal of a given functional, where $y$ is the trajectory and $T$ the end-time of the integral. We considered the existence or not of boundary conditions on the initial and end time, as well with higher-order scale derivatives.

% -------------------------------------------------

\section*{Acknowledgments}

This work was supported by Portuguese funds through the CIDMA - Center for Research and Development in Mathematics and Applications, and the Portuguese Foundation for Science and Technology ("FCT –- Funda\c{c}\~{a}o para a Ci\^{e}ncia e a Tecnologia"), within project PEst-OE/MAT/UI4106/2014.

% ------------------------------------------------

% ----------------------------------------

% ----------------------------------------

\end{document}